\newcommand{\floor}[1]{\left\lfloor #1 \right\rfloor}
\theoremstyle{plain}
\newtheorem{theorem}{Theorem}
\newtheorem{proposition}[theorem]{Proposition}
\newtheorem{lemma}[theorem]{Lemma}
\newtheorem{corollary}[theorem]{Corollary}
\theoremstyle{definition}
\newtheorem{definition}{Definition}
\newtheorem{remark}[definition]{Remark}
\newcommand{\cS}{{\mathcal S}}
\DeclareMathAlphabet{\mathbfsl}{OT1}{ppl}{b}{it} 
\newcommand{\bu}{{\mathbfsl u}}
\newcommand{\bv}{{\mathbfsl v}}
\newcommand{\bs}{{\mathbfsl s}}
\newcommand{\bw}{{\mathbfsl{w}}}
\newcommand{\bx}{{\mathbfsl{x}}}
\newcommand{\bbZ}{{\mathbb Z}}
\renewcommand{\ge}{\geqslant}
\renewcommand{\le}{\leqslant}
\newcommand{\bd}{\mathtt{d}}
\newcommand{\HH}{\mathbb{H}}
\newcommand{\bCap}{{\rm Cap}}
\newcommand{\vz}{\mathbfsl{z}}
\newcommand{\vk}{\mathbfsl{k}}
\newcommand{\vzero}{\mathbf{0}}
\newcommand{\Hessian}{\mathtt{H}}
\newcommand{\bbS}{\mathbb{S}}
\newcommand{\cT}{\mathcal{T}}
\newcommand{\A}{\mathtt{A}}
\newcommand{\T}{\mathtt{T}}
\newcommand{\C}{\mathtt{C}}
\newcommand{\G}{\mathtt{G}}
\begin{document}
	
\title{Evaluation of the Gilbert--Varshamov Bound using Multivariate Analytic Combinatorics\\[-3mm]}
\author{
		\IEEEauthorblockN{
			Goyal Keshav\IEEEauthorrefmark{1},
			Duc Tu Dao\IEEEauthorrefmark{1},
			Han Mao Kiah\IEEEauthorrefmark{1},
			and
			Mladen Kova{\v{c}}evi{\'c}\IEEEauthorrefmark{2}}
		
		\IEEEauthorblockA{
			\IEEEauthorrefmark{1}\small School of Physical and Mathematical Sciences, Nanyang Technological University, Singapore\\
			\IEEEauthorrefmark{2}\small Department of Electrical Engineering, University of Novi Sad, Serbia\\
			email: \{{keshav002,daoductu001}, {hmkiah}\}@ntu.edu.sg, kmladen@uns.ac.rs\\[-5mm]
		} 
	}
	
	\maketitle
	\hspace{-3mm}\begin{abstract}
		Analytic combinatorics in several variables refers to a suite of tools that provide sharp asymptotic estimates for certain combinatorial quantities.
		In this paper, we apply these tools to determine the Gilbert--Varshamov (GV) bound
		for the sticky insertion and the constrained-synthesis channel.
	\end{abstract}
	

\section{Introduction}\label{sec:intro}

Established in the 1950s, the Gilbert-Varshamov bound \cite{Gilbert1952, Varshamov1957} is a fundamental lower bound on the size of the largest code. 
In this paper, we study the sticky-insertion channel with $L_1$ metric and the constrained-synthesis channel with Hamming metric.
To determine the GV bound, one requires two quantities: the size of the input space, $\cS$, and also, the ball volume, that is, the number of words with distance at most $d - 1$ from a center word. 
Then the GV bound is given by the ratio of $|\cS|$ and the average ball volume \cite{gu1993generalized} (details will be discussed in Section~\ref{sec:GV}).
In~\cite{kolesnik1991generating}, the authors showed that the asymptotic rate of average ball volume can be computed via some optimization problem. Later, Marcus and Roth modified the optimization problem by including an additional constraint and variable, 
and the resulting bound improves the usual GV bound \cite{marcus1992improved}.
In \cite{keshav2022evaluating}, efficient numerical procedures to solve these optimization problems have been provided.

In this work, we propose a different approach to estimate the average ball volume by using multivariate analytic combinatorics (see \cite{pemantle2008} for a survey of combinatorial applications and also,  \cite{Melczer2021} for an introductory text). We remark that the use of generating functions in determining GV bound (and more generally, coding theory) is not new. 
In one of the pioneering papers, Kolesnik and Krachkovsky \cite{kolesnik1991generating} employed generating functions to compute the GV bound for runlength-limited codes.
Recently, new tools were developed in multivariate analytic combinatorics \cite{pemantle2008}. These tools were then used to determine certain asymptotic properties of runlength-limited sequences in~\cite{kovavcevic2019RLL,kovavcevic2021asymptotic} and the capacities of certain cost-constrained channels for DNA synthesis~\cite{lenz2021multivariate}.

\section{Preliminaries}
Let $\Sigma$ be an alphabet, 
$\Sigma^n$ the set of all words of length $n$ over $\Sigma$, and 
$\Sigma^*$ the set of all finite-length words over $\Sigma$. We recall the entropy function $\HH(p) \triangleq -p \log(p) -(1-p)\log(1-p)$\,. The $\log$ notation denotes logarithm base 2. 

\subsection{Gilbert-Varshamov Bound}\label{sec:GV}

Let $\cS\subseteq \Sigma^*$ and set $\cS_n=\cS\cap \Sigma^n$.
Let $\bd: \cS \times \cS \rightarrow \bbZ_{\geq 0}\cup\{\infty\}$ be a metric defined on $\cS$. 
For each $n$, we want to find $C \subseteq \cS_n$, called $(\cS_n,d)$-code, such that $\bd(c_1, c_2) \geq d$ for all $c_1, c_2 \in C$. The largest code size, $A(\cS_n,d) \triangleq \max\{|C|: C \subseteq \cS_n,
\bd(c_1,c_2) \geq\,d, \text{ for all } c_1, c_2 \in C, c_1 \neq c_2\}$, is our quantity of interest.
In terms of asymptotic rates, fixing $0\leq \delta \leq 1$, we aim to find the highest attainable rate
~$\alpha_{\cS}(\delta)=\limsup_{n \to \infty} \frac{\log A(\cS_n,\floor{\delta n})}{n}$.

Let $\bu\in \cS_n$ and define $V(\bu,r) = \{\bv \in \cS_n: \bd(\bu,\bv) \leq r\}$ to be the ball of radius $r$ centered at $\bu$. 
If $|V(\bu,r)|$ is constant over all $\bu \in \cS_n$, the GV bound states that $A(\cS_n,d) \geq |\cS_n|/|V(\bu,d-1)|$. 
Otherwise, the bound needs to be adapted. 
Kolesnik and Krachkovsky~\cite{kolesnik1991generating}showed that 
the GV lower bound can be generalized to $|\cS|/4\overline{V(d-1)}$ where $\overline{V(d-1)} = \frac{1}{|\cS_n|}\sum_{\bu\in\cS_n} |V(\bu,d-1)| $ is the average ball volume. 
This was further improved by Gu and Fuja~\cite{gu1993generalized} to $|\cS_n|/\overline{V(d-1)}$. 
For simplicity, we consider the collection of word pairs
$T(\cS_n,d-1) \triangleq \{(\bu,\bv)\in\cS_n^2 : \bv\in V(\bu,d-1)\}$. 
Hence, $|T(\cS_n,d-1)|$ provides the total ball size and the above result is restated as $A(\cS_n,d)\geq |\cS_n|^2/|T(\cS_n,d-1)|$.

In terms of asymptotic rates, the GV bound asserts that there exists a family of $(\cS_n,\floor{\delta n})$-codes such that their rates approach 
\begin{equation}\label{eq:gv}
	R_{GV}(\cS,\delta) = 2\bCap(\cS) - \widetilde{T}(\cS,\delta)
\end{equation}
where $\bCap(\cS) \triangleq \limsup_{n \rightarrow \infty} \frac{\log |\cS_n|}{n}$, and $\widetilde{T}(\cS,\delta) \triangleq \limsup_{n\rightarrow \infty} \frac{\log |T(\cS_n,\floor{\delta n})|}{n}$. Note that $\bCap(\cS) = \widetilde{T}(\cS,0)$.

In summary, to find a lower bound for the highest achievable rate $R_{GV}(\cS,\delta)$, we need to compute  $\widetilde{T}(\cS,\delta)$.
In the following sections, the set $\cS_n$ will be characterized by some parameters, so we will replace $\cS_n$ with those parameters. The floor function may be omitted for simplicity.

\subsection{Analytic Combinatorics in Several Variables (ACSV)}\label{ACSV}

Finding the total ball volume $|T(\cS_n,\floor{\delta n})|$ or its asymptotic rate $\widetilde{T}(\cS,\delta)$ is the main goal of this paper. 
In many cases, generating functions provide a concise description of  $|T(\cS_n,\floor{\delta n})|$. 
As most of these generating functions involve more than one variable, 
we borrow tools from multivariate analytic combinatorics to provide asymptotic estimates.

Let the number of variables be $\ell$ and let $\vz$ denote the $\ell$-tuple $(z_1,\ldots, z_\ell)$. With $\vk \in \bbZ_{\geq 0}^\ell$, let $\vz^\vk$ denote the monomial $\prod_{i=1}^\ell z_i^{k_i}$. 
Suppose that we have a multivariate array $\{a_{\vk} \in \bbZ_{\geq 0} : \vk \in \bbZ_{\geq 0}^\ell\}$ with the generating function $F(\vz) = \sum_{\vk}a_{\vk} \vz^\vk$. 
The following theorem is crucial for this paper.

\begin{theorem}[Theorem 1.3 in \cite{pemantle2008}]\label{thm:acsv}
Given $F(\vz) = \sum_{\vk}a_{\vk} \vz^\vk= \frac{G(\vz)}{H(\vz)}$ where $G$ and $H$ are both analytic, $H(\vzero) \neq 0$, and $a_{\vk} > 0$.

	For each $\vk= (k_1,k_2,\ldots k_\ell)>\vzero$,  
	there is a unique solution $\vz^* = (z_1^*, z_2^*, \ldots, z_\ell^*) > \vzero$ 
	satisfying the equations
	\begin{equation}\label{eq:partial}
	\begin{split}
		H(\vz) &= 0\\
		k_\ell z_j \frac {\partial H(\vz)}{\partial z_j}&= k_j z_\ell \frac{\partial H(\vz)}{\partial z_\ell} \text{ for } 1 \leq j \leq \ell -1.
	\end{split}
	\end{equation}
	Furthermore, if $G(\vz^*) \neq 0$,
	\begin{equation}\label{eq:approx}
		a_\vk \sim (2\pi)^{-(\ell-1)/2}\Hessian^{-1/2}\frac{G(\vz^*)}{-z_\ell \frac{\partial H(\vz)}{ \partial z_\ell} \large\rvert_{\vz =\vz^*}} k_\ell^{-(\ell-1)/2} (\vz^*)^{-\vk},
	\end{equation}
	\noindent {\small where $\Hessian$ is the determinant of the Hessian of the function parametrizing the hypersurface $\{H=0\}$ in logarithmic coordinates.}
\end{theorem}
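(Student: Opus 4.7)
The plan is to recover the coefficient $a_\vk$ by the multivariate Cauchy integral
\begin{equation*}
a_\vk \;=\; \frac{1}{(2\pi i)^\ell} \oint\!\cdots\!\oint \frac{G(\vz)}{H(\vz)}\,\frac{dz_1\cdots dz_\ell}{z_1^{k_1+1}\cdots z_\ell^{k_\ell+1}},
\end{equation*}
taken on a small polytorus centered at the origin, and then to deform the torus so that one-variable residues in $z_\ell$ pick up the pole on the singular variety $\{H=0\}$. This reduces the problem to an $(\ell-1)$-dimensional integral whose integrand is of the form $e^{-\phi(\vz)}$ times a rational factor, where
$\phi(\vz)\;=\;\sum_{j=1}^{\ell-1} k_j\log z_j + k_\ell \log \zeta(z_1,\dots,z_{\ell-1})$
and $\zeta$ is the branch of $z_\ell$ implicitly defined by $H=0$ that lies closest to the origin.

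The next step is a saddle-point analysis of this reduced integral. The critical points of $\phi$ on $\{H=0\}$ are found by Lagrange multipliers: setting $\partial \phi/\partial z_j =0$ and using $\partial \zeta/\partial z_j = -(\partial H/\partial z_j)/(\partial H/\partial z_\ell)$ yields exactly the second family of equations in \eqref{eq:partial}, together with $H(\vz)=0$. I would prove existence and uniqueness of a positive real solution $\vz^*>\vzero$ by a convexity argument in logarithmic coordinates $\vz=e^\vu$: the restriction of $\phi$ to the real slice of $\{H=0\}$ is strictly convex whenever the coefficients $a_\vk$ are positive (this is the combinatorial content of the hypothesis $a_\vk>0$, which forces the relevant component of the amoeba of $H$ to be a convex domain whose gradient map is a bijection onto the positive orthant of directions).

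Once $\vz^*$ is identified, I would apply the standard Laplace/saddle-point asymptotic formula in $\ell-1$ variables. Parametrizing the reduced integral in logarithmic coordinates, one writes the integrand as $\exp(-k_\ell \Phi(\vu))$ up to polynomial factors, where $\Phi$ is the function on $\{H=0\}$ whose Hessian $\Hessian$ appears in the statement. The quadratic expansion of $\Phi$ around $\vu^*=\log\vz^*$ contributes a factor $(2\pi/k_\ell)^{(\ell-1)/2}\Hessian^{-1/2}$, and the residue in $z_\ell$ contributes the factor $G(\vz^*)/\bigl(-z_\ell\,\partial H/\partial z_\ell\bigr)\!\bigr|_{\vz=\vz^*}$; the exponential part evaluates to $(\vz^*)^{-\vk}$, matching \eqref{eq:approx} precisely.

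The main obstacle is justifying the contour deformation globally: one must show that the original polytorus can be continuously deformed to a torus through $\vz^*$ without crossing any other component of $\{H=0\}$ or picking up additional residues, and that, on this deformed contour, the mass of the integral concentrates near the saddle $\vz^*$ (i.e., the critical point is a strict minimum of $|\vz^{-\vk}|$ on the deformed torus in the $\ell-1$ directions of integration). This is the content of the \emph{minimality} of $\vz^*$ as a smooth point of the singular variety; the positivity hypothesis $a_\vk>0$ and the analyticity of $G,H$ with $H(\vzero)\neq 0$ together with the Newton-polytope / amoeba machinery of \cite{pemantle2008} supply this minimality, after which the remaining saddle-point estimate is routine.
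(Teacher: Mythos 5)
The paper does not prove this theorem: it is imported verbatim as Theorem~1.3 of \cite{pemantle2008} and used as a black box, so there is no in-paper proof to compare your attempt against. Your sketch is nonetheless a faithful outline of the smooth-point asymptotics argument in that reference (see also \cite{Melczer2021}): multivariate Cauchy integral, residue in $z_\ell$ to descend onto $\{H=0\}$, saddle-point analysis of the remaining $(\ell-1)$-fold integral, the identification $\partial\zeta/\partial z_j = -(\partial H/\partial z_j)/(\partial H/\partial z_\ell)$ reproducing \eqref{eq:partial}, amoeba convexity for existence and uniqueness of $\vz^*>\vzero$, and assembly of the Laplace factor $(2\pi/k_\ell)^{(\ell-1)/2}\Hessian^{-1/2}$ with the residue prefactor $G(\vz^*)/\bigl(-z_\ell\,\partial H/\partial z_\ell\bigr)\big|_{\vz^*}$ into \eqref{eq:approx}. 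You also correctly flag the delicate step, namely minimality of the critical point and justification of the global contour deformation.

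One small imprecision worth noting: the hypothesis $a_\vk>0$ does not in any direct sense make $\phi$ restricted to the real slice of $\{H=0\}$ strictly convex. What positivity actually delivers is a Pringsheim-type statement (for each direction there is a dominant singular point on the positive real polyradius) together with convexity of the component of the amoeba complement of $H$ containing $\vzero$, whose logarithmic gradient map bijects onto the cone of directions $\vk$; that combination is what yields the unique $\vz^*>\vzero$ and its minimality. With that caveat, your plan is a correct outline of the proof of the cited result, even though the paper itself supplies none.
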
 
For a detailed calculation of the Hessian matrix, we refer readers to Lemma 5 in \cite{Melczer2021}. 
More general asymptotic results are available in Theorems 5.1--5.4 of \cite{Melczer2021}. For this paper, we consider the case where all coordinates of $\vk$ grow linearly with $n$\, 
i.e. $k_i = nr_i$ where $r_i$ is fixed for $1\leq i\leq \ell$. 
Hence, all terms in \eqref{eq:approx} tend to constants except $k_\ell^{-(\ell-1)/2} (\vz^*)^{-\vk}$. 
Therefore, we simplify the asymptotic behavior of sequence $a_\vk$ as follows:
\begin{align}
	&a_{nr_1,nr_2,\ldots, nr_\ell} = \Theta\left( n^{(\ell-1)/2}\prod_{i=1}^{\ell} (z_i^*)^{-nr_i}\right) \label{eq:asym_approx-0}\\
	&\lim_{n \rightarrow \infty} \frac{\log a_{nr_1, nr_2, \ldots, nr_\ell}}{n}  = -\sum_{i=1}^{\ell} r_i\log z_i^* \label{eq:asym_approx}
\end{align}

\subsection{Our Contribution}

In this work, we apply Theorem~\ref{thm:acsv} to obtain GV bounds for sticky-insertion and constrained-synthesis channels.
Specifically, for each channel, we derive the corresponding multivariate generating functions for the total ball size and 
then set up the corresponding set of equations~\eqref{eq:partial}.
After which, we solve this system and hence, obtain the corresponding GV bounds.

For the sticky-insertion channels, we obtain lower bounds on the rates of length-$n$ binary codes correcting $b$ sticky insertions,
where $b$ is proportional to $n$. 
Previously, such results were only in the instance where $b$ is constant with respect to $n$.

To the best of our knowledge, previous work on codes for DNA synthesis has not studied error correction.
Hence, our work provides a rudimentary lower bound for these codes.

\section{GV Bound for the Sticky-Insertion Channel}\label{sec:Sticky}
This section describes the procedure to compute the GV bound for the {\em sticky-insertion} channel. In this section, $\Sigma = \{0,1\}$.

Formally, for the sticky-insertion channel, the inputs are binary strings of length $n$. The channel acts on the transmitted strings by introducing duplication errors in succession, where duplication is defined as the insertion of the same bit next to its original position.
For example,  $\bu = 1010$ is an input and $\bv = 1\underline{11}010\underline{00}$ is a possible output. Here, the inserted bits are underlined and the total number of errors is four.

It is clear that the channel does not alter the number of runs in the input. 
Hence, we consider the constrained space $\cS(n,r)$ that comprises all binary words of length $n$ with $r$ runs. 
Since the number of runs is preserved, it is more convenient to use  the following quantity
$\bbS(n,r) = \{(u_1,u_2, \ldots, u_r)  \in \mathbb{Z}_{\geq1}^n; u_1 + u_2 + \ldots + u_r = n\}$. In particular, $u_i$ is the length of the $i$-th run. The size of $\bbS(n,r)$ is equal to the total number of solutions of the equation $u_1 + u_2 + \ldots + u_r = n$, which is $\binom{n-1}{r-1}$. Therefore, for binary alphabet, $|\cS(n,r)| = 2|\bbS(n,r)| = 2\binom{n-1}{r-1}$. In general, the constrained space has size $q(q-1)^{r-1}\binom{n-1}{r-1}$ for $q$-ary alphabet. We remark that our analysis only deals with runs, hence it is still correct for $q$-ary alphabet. The result for the general case will be discussed in the extended version.  

The capacity for the binary case is in the below proposition.

\begin{proposition}\label{prop:run-cap}
For fixed $0\le \rho\le 1$, we have that $$\bCap(\rho)= \lim_{n\to\infty}\frac{\log |\cS(n,\floor{\rho n})|}{n}=\HH(\rho).$$
\end{proposition}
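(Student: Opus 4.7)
Since Proposition~\ref{prop:run-cap} is the simplest instance of the $\cS$-capacity computations that drive the rest of the paper, my plan is to use it as a warm-up illustration of Theorem~\ref{thm:acsv}; the alternative route via Stirling applied directly to $2\binom{n-1}{\floor{\rho n}-1}$ is elementary and yields the same limit. First I would derive the ordinary bivariate generating function for the composition counts $|\bbS(n,r)|$. Using the ``sequence of positive parts'' construction,
$$\sum_{n,r\geq 1} |\bbS(n,r)|\, x^n z^r \;=\; \sum_{r\geq 1} z^r \Bigl(\tfrac{x}{1-x}\Bigr)^r \;=\; \frac{xz}{1-x-xz},$$
so for the binary alphabet the identity $|\cS(n,r)| = 2|\bbS(n,r)|$ gives the rational form with $G(x,z) = 2xz$ and $H(x,z) = 1-x-xz$, both analytic, $H(\vzero) = 1 \neq 0$, and all coefficients positive.

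Next, I would apply Theorem~\ref{thm:acsv} with $(\ell,k_1,k_2) = (2, n, \floor{\rho n})$, so $(r_1, r_2) = (1, \rho)$. The critical-point system \eqref{eq:partial} reduces to $1 - x - xz = 0$ together with $\rho \cdot x(1+z) = xz$, i.e.\ $\rho(1+z) = z$, which admits the unique positive solution $z^* = \rho/(1-\rho)$ and $x^* = 1/(1+z^*) = 1-\rho$ for every $\rho \in (0,1)$. The nondegeneracy $G(\vz^*) \neq 0$ is immediate. Substituting into \eqref{eq:asym_approx} then yields
$$\bCap(\rho) \;=\; -\log(1-\rho) \,-\, \rho\log\tfrac{\rho}{1-\rho} \;=\; -(1-\rho)\log(1-\rho) - \rho\log\rho \;=\; \HH(\rho).$$

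The only loose ends are the endpoint cases $\rho \in \{0,1\}$, which fall outside the positivity hypothesis of Theorem~\ref{thm:acsv} but are trivial: $|\cS(n,1)| = |\cS(n,n)| = 2$ and $\HH(0) = \HH(1) = 0$. A minor bookkeeping point is that $\floor{\rho n}/n \to \rho$ rather than equaling $\rho$ exactly, but continuity of the closed-form rate $-\log x^*(\rho) - \rho\log z^*(\rho)$ in $\rho$ absorbs this perturbation. There is no substantive obstacle; the real value of working this example out is to rehearse the ACSV recipe in the simplest possible two-variable setting before confronting the much more intricate total-ball generating function in Section~\ref{sec:Sticky}.
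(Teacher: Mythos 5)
Your proof is correct, and it recovers the paper's claim via a genuinely different (and heavier) route. The paper establishes the closed form $|\cS(n,r)| = 2\binom{n-1}{r-1}$ in the sentence immediately preceding the proposition and then states the capacity with no further argument, because the limit follows at once from Stirling's formula applied to the binomial; that is the paper's implicit proof, which you explicitly acknowledge as the ``elementary'' route. You instead derive the composition generating function $\sum_{n,r\geq 1}|\bbS(n,r)|\,x^n z^r = xz/(1-x-xz)$ and run the ACSV critical-point computation of Theorem~\ref{thm:acsv}, obtaining $x^*=1-\rho$, $z^*=\rho/(1-\rho)$, and recovering $\HH(\rho)$ from~\eqref{eq:asym_approx}; the algebra checks out, and $G(\vz^*)=2\rho\neq 0$ on $\rho\in(0,1)$ as you note. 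The trade-off is that Stirling is two lines and hypothesis-free, while the ACSV argument requires verifying positivity of the critical point, nonvanishing of $G$ there, and (implicitly) the smoothness conditions that Theorem~\ref{thm:acsv} presupposes; the payoff, which you state well, is that rehearsing the recipe in this two-variable toy case is exactly the preparation needed for the four-variable generating function of Lemma~\ref{lem:generateN}, where no closed-form coefficient formula exists. One small caveat on your endpoint remark: for $\rho=0$ we have $\floor{\rho n}=0$ and $\cS(n,0)=\emptyset$ for $n\geq 1$, so that endpoint is degenerate rather than trivial; this is an imprecision already latent in the proposition's phrasing (and disappears if one reads the claim for $\rho\in(0,1]$ and interprets $\rho=0$ as the limit $\rho\to 0^+$), not a defect your proof introduces.
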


Next, we formally define a sticky-insertion-correcting code. For convenience, we introduce the notion of confusability.
\begin{definition}\label{def:confusable}
	Two words $\bu,\bv \in \bbS(n,r)$ are $b$-confusable if there exists $\bw \in \bbS(n+b,r)$ such that $\bw$ can be obtained from both $\bu$ and $\bv$ via increasing their coordinates by $b$ units. 
\end{definition}
For example, $\bu = (2,3), \bv=(1,4) \in \bbS(5,2)$ are $1$-confusable since $\bw = (2,4)$ can be obtained from $\bu$ by adding one to its second coordinate or from $\bv$ by adding one to its first coordinate. 

We say that $C\subseteq \bbS(n,r)$ is an {\em $(n,r,b)$-sticky-insertion code} if $\bu$ and $\bv$ are not $b$-confusable for any pair of distinct codewords $\bu,\bv\in C$.
Suppose we have codes $C_1,C_2,\ldots, C_{n}$ such that $C_r$ is an $(n,r,b)$-sticky-insertion code for $1\le r\le n$.
Then any pair of distinct codewords in  $C = \bigcup_{r=1}^{n} C_r$ are not $b$-confusable too. 


Let $A_{SI}(n,r,b)$ be the size of a largest $(n,r,b)$-sticky-insertion code and 
we set $A_{SI}(n,b)=\sum_{r=1}^n A_{SI}(n,r,b)$.
Bounds on $A_{SI}(n,b)$ were first studied in \cite{Levenshtein1965}%
\footnote{Unlike Sections~\ref{sec:GV} and~\ref{sec:Synthesis}, $A_{SI}$ is a function of the number of correctable errors, and not a function of the minimum distance. This is to be consistent with the notation of previous work.}.
A different construction of codes without the constraint of runs was subsequently given in \cite{dolecek2010repetition} and \cite{mahdavifar2017}. 
Recently, in \cite{kovavcevic2018}, the authors 
obtained the following upper and lower bounds on $A_{SI}(n,b)$. 
\begin{equation*}
	\frac{2^{n+b}}{n^b} \lesssim A_{SI}(n,b) \lesssim \frac{2^{n+b+s}}{n^b}s!(b-s)!
\end{equation*}
Here, $s = \floor{\frac{b+1}{3}}$.
We also restrict to the space with runs constraint $r=\floor{\rho n}$ for fixed $\rho$ as in \cite{kovavcevic2018}. 
In contrast, we allow $b=\floor{\beta n}$ to grow with $n$. The difference is that $\lim_{n\to \infty}\frac{\log{|A_{SI}(n,b)|}}{n} = 1$, whenever $b$ is a constant.
This section aims to obtain the GV lower bound for $$ \alpha_{SI}(\beta)\triangleq \lim_{n\to \infty}\frac{\log{|A_{SI}(n,\beta n)|}}{n}.$$ 
We recall that the $L_1$-{\em distance} between $\bu = (u_1,u_2,\ldots ,u_r)$ and $\bv = (v_1,v_2,\ldots, v_r)$ is $D(\bu,\bv) \triangleq \sum_{i=1}^{r}|u_i-v_i|$. The $L_1$-{\em distance} fully characterizes $b$-confusability for this channel.


\begin{lemma}\label{Lem:d-conf}
	$\bu, \bv \in \bbS(n,r)$ are $b$-confusable if and only if $D(\bu,\bv) \leq 2b$.
\end{lemma}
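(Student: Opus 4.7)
The plan is to reformulate $b$-confusability as a coordinate-wise domination statement: by Definition~\ref{def:confusable}, $\bu,\bv\in\bbS(n,r)$ are $b$-confusable iff there exists $\bw\in\bbS(n+b,r)$ with $w_i\ge u_i$ and $w_i\ge v_i$ for all $i$, and $\sum_i (w_i-u_i)=\sum_i(w_i-v_i)=b$. Note both sums equal $b$ automatically from $\sum_i u_i=\sum_i v_i=n$ combined with $\sum_i w_i=n+b$, so the only real constraints are the domination inequalities together with $\sum_i w_i = n+b$.

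The central bookkeeping tool I will use is the positive/negative split of the $L_1$-distance. Set $P \triangleq \sum_{i:\, v_i\ge u_i}(v_i-u_i)$ and $N \triangleq \sum_{i:\, u_i>v_i}(u_i-v_i)$, so $D(\bu,\bv)=P+N$. Since $\sum_i u_i=\sum_i v_i$, the telescoping identity $P-N=\sum_i(v_i-u_i)=0$ gives $P=N=D(\bu,\bv)/2$. A further useful identity is $\sum_i\max(u_i,v_i)=\sum_i u_i+\sum_i\max(0,v_i-u_i)=n+P=n+D(\bu,\bv)/2$.

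For the forward direction, I assume $\bw$ exists. Because $w_i\ge\max(u_i,v_i)$, I get $b=\sum_i(w_i-u_i)\ge\sum_i\max(0,v_i-u_i)=P$ and, symmetrically, $b\ge N$. Adding the two inequalities yields $D(\bu,\bv)=P+N\le 2b$.

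For the converse, assume $D(\bu,\bv)\le 2b$, equivalently $P=D(\bu,\bv)/2\le b$. I construct $\bw$ explicitly: start with $w_i^{(0)}\triangleq\max(u_i,v_i)$, which already satisfies the domination constraints and has sum $n+P\le n+b$. I then distribute the remaining $b-P$ surplus units arbitrarily among the coordinates (for instance, all on the first coordinate) to obtain $\bw\in\bbZ_{\ge 1}^r$ with $\sum_i w_i=n+b$; the domination inequalities are preserved since we only increase coordinates. Thus $\bw\in\bbS(n+b,r)$ witnesses $b$-confusability.

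I do not anticipate a real obstacle here: the argument is a short linear-algebraic check. The only place where one has to be slightly careful is to notice that the two sum conditions in the definition of confusability reduce to a single condition once $\sum_i u_i=\sum_i v_i=n$, which is exactly what makes the identity $P=N$ tick.
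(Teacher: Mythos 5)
Your proof is correct and takes essentially the same route as the paper: the converse uses the identical construction $w_i = \max(u_i,v_i)$, and your observation that the positive and negative parts of $\bv - \bu$ coincide (your $P = N = D(\bu,\bv)/2$) is exactly the paper's step $\sum_{u_i>v_i}(u_i-v_i) = \sum_{v_i>u_i}(v_i-u_i) = b'$. For the forward direction the paper simply invokes the triangle inequality, $D(\bu,\bv) \le D(\bu,\bw) + D(\bw,\bv) = 2b$, which is a touch slicker than your coordinate-wise domination bound but mathematically the same estimate. One genuine improvement in your write-up: you explicitly pad $\sum_i \max(u_i,v_i)$ from $n + D(\bu,\bv)/2$ up to exactly $n+b$ so that the witness $\bw$ really lies in $\bbS(n+b,r)$ as Definition~\ref{def:confusable} requires; the paper's proof stops at the $\max$ construction, which only yields $\bw \in \bbS(n+b',r)$ with $b'\le b$, and leaves this final (easy but necessary) adjustment implicit.
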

\begin{proof}
Suppose that $\bu$ and $\bv$ are $b$-confusable. Then there exists $\bw$ that can be obtained by introducing $b$ sticky-insertions from both $\bu$ and $\bv$.
Hence, $D(\bu,\bw) = b$ and $D(\bv,\bw) = b$.
Therefore, $D(\bu,\bv) \leq D(\bu,\bw) + D(\bv,\bw) = 2b$, as required.

Conversely, suppose that $D(\bu,\bv) = 2b' \leq 2b$.
Since $\sum_{i} u_i = \sum_{i} v_i=n$, we have that
\[\sum_{u_i > v_i} (u_i - v_i) = \sum_{v_i > u_i} (v_i - u_i) = b'.\] 

We construct $\bw = (w_1,w_2,\ldots,w_r)$ such that $w_i =  {\max}(u_i,v_i)$. Then,
\[
	D(\bu,\bw) = \sum_{i=1}^{r}|w_i - u_i| = \sum_{v_i > u_i}| v_i - u_i| = b' \leq b. 
\]
\[	D(\bv,\bw) = \sum_{i=1}^{r}|w_i - v_i| = \sum_{u_i > v_i}| u_i - v_i| = b' \leq b. 
\]
Therefore, $\bw$ can be obtained from $\bu$ and $\bv$ via $b$ sticky-insertions.
And hence, they are $b$-confusable by Definition~\ref{def:confusable}.
\end{proof}

\subsection{Total Ball Size}

{
	We consider balls with center $\bu \in \bbS(n,r)$ and radius $2b$, that is, 
	$V(\bu,2b) = \{\bv \in \bbS(n,r): D(\bu,\bv) \leq 2b\}$. 
	Then Lemma \ref{Lem:d-conf} states that $\bu$ and $\bv$ are $b$-confusable if and only if $v \in V(\bu,2b)$.
	
	Since the space $\bbS(n,r)$ is specified by word length $n$ and the number of runs $r$, 
	we consider the total ball $T(n,r,d) = \{(\bu,\bv) \in \bbS(n,r)^2: D(\bu,\bv) \leq d\}$ and our task is to determine $\widetilde{T}(\rho,\delta) \triangleq \limsup_{n \rightarrow \infty} \frac{\log |T(n,\floor{\rho n},\floor{\delta n})|}{n}$ where $\delta=2\beta$. 
	
	To this end, we consider the number of pairs $(\bu,\bv)$ of $L_1$ distance exactly $s$, denoted by $N(n_1,n_2,r,s) = |\{(\bu,\bv) \in \bbS(n_1,r)\times \bbS(n_2,r): D(\bu,\bv) = s\}|$. Here, we propose the following lemma to recursively count $N(n_1,n_2,r,s)$. We note that $N(n_1,n_2,r,s) = 0$ if one of $n_1,n_2,r,s$ is negative. 
	\begin{lemma}\label{lem:recur_stick}
		\begin{align*}
			N(n_1,n_2,r,s) &= \sum_{i \geq 1}N(n_1-i,n_2-i,r-1,s) \\ 
			&\hspace{4mm} + \sum_{i \geq 1}\sum_{j \geq 1}N(n_1-i,n_2-i-j,r-1,s-j) \\
			&\hspace{4mm} + \sum_{i \geq 1}\sum_{j \geq 1}N(n_1-i-j,n_2-i,r-1,s-j)\,.  
		\end{align*}
	\end{lemma}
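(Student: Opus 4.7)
The plan is to establish the recursion by conditioning on the first coordinate pair $(u_1,v_1)$ and partitioning the set $\{(\bu,\bv)\in\bbS(n_1,r)\times\bbS(n_2,r) : D(\bu,\bv)=s\}$ into three classes according to whether $u_1=v_1$, $u_1<v_1$, or $u_1>v_1$. In each class I would delete the first coordinates and exhibit a bijection to a set counted by $N$ with smaller parameters, contributing one of the three sums on the right-hand side.

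First I would handle the case $u_1=v_1=i$ for some $i\ge 1$. Removing the first coordinates yields $(\bu',\bv')\in\bbS(n_1-i,r-1)\times\bbS(n_2-i,r-1)$, and the $L_1$ distance is preserved since $|u_1-v_1|=0$. This clearly is a bijection and, summing over $i\ge 1$, contributes $\sum_{i\ge 1} N(n_1-i,n_2-i,r-1,s)$. Next, for the case $u_1<v_1$, I would write $u_1=i$ and $v_1=i+j$ with $i\ge 1$, $j\ge 1$; removal of the first coordinates gives $(\bu',\bv')\in\bbS(n_1-i,r-1)\times\bbS(n_2-i-j,r-1)$ with $D(\bu',\bv')=s-j$, yielding the second double sum. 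The third case $u_1>v_1$ is symmetric, with roles of $n_1$ and $n_2$ swapped, giving the third double sum. The three cases are manifestly disjoint and exhaustive, so summing the three contributions proves the identity.

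The verification that the ranges of $i$ and $j$ are correct and that the sums are effectively finite (terms with negative arguments vanish by the convention stated in the lemma) is routine. I do not expect any real obstacle here; the only point requiring care is ensuring that every parametrization $(u_1,v_1)=(i,i)$, $(i,i+j)$, or $(i+j,i)$ with $i,j\ge 1$ is a valid pair of positive run-lengths (since entries of $\bbS(\cdot,\cdot)$ are in $\mathbb{Z}_{\ge 1}$), which is built into the lower bounds of the summation indices.
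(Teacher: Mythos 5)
Your proof is correct and follows essentially the same approach as the paper: you peel off one run-length coordinate from each of $\bu,\bv$, split into the three cases $u=v$, $u<v$, $u>v$, and recurse. The only (immaterial) difference is that you truncate the first coordinate while the paper truncates the last; reversing the tuples is a distance-preserving bijection on $\bbS(n,r)$, so the two choices are equivalent.
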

	\begin{proof}
		Let $\bu= (u_1,\ldots, u_r)$ and $\bv=(v_1,\ldots,v_r) \in N(n_1,n_2,r,s)$. We consider truncating the last run $u_r$ and $v_r$. If $u_r=v_r=i$ for $i\geq 1$, we get the first sum where the distance remains the same. Otherwise, $u_r = i$ and $v_r = i+j$ for $i,j \geq 1$. Here, the length of $\bu,\bv$ become $n_1-i$ and $n_2-i-j$ respectively. Their distance decreases by $|u_r-v_r|=j$. Hence, we get the second term. The last one is obtained similarly when $u_r > v_r$.
	\end{proof}
	With this recursion, we are ready to find the generating function $F(x_1,x_2,y,z) \triangleq \sum_{n_1,n_2,r,s \geq 0} N(n_1,n_2,r,s)x_{1}^{n_1}x_{2}^{n_2}y^rz^s$.
\begin{lemma}\label{lem:generateN}
$F(x_1,x_2,y,z) = \frac{G(x_1,x_2,y,z)}{H(x_1,x_2,y,z)}$,
where 
{
\begin{align*}
	G &= (1-x_1x_2)(1-x_1z)(1-x_2z)\,, \\ 
	H &= (1-x_1x_2)(1-x_1z)(1-x_2z) - yx_1x_2(1-x_1x_2z^2)\,.  
\end{align*}
}
\end{lemma}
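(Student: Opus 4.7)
The plan is to convert the recursion in Lemma~\ref{lem:recur_stick} into a functional equation satisfied by $F(x_1,x_2,y,z)$, then solve it. First I would multiply both sides of the recursion by $x_1^{n_1}x_2^{n_2}y^r z^s$ and sum over $n_1,n_2,r,s\ge 0$. On the left this reproduces $F$ minus the boundary contribution. Since $\bbS(0,0)$ consists of the empty tuple and $\bbS(n,0)=\emptyset$ for $n\ge 1$, the only nonzero base value is $N(0,0,0,0)=1$, giving the constant term $1$.

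Next I would handle the three sums on the right individually via substitutions that isolate geometric series. For the first, the reindexing $n_1\mapsto n_1-i$, $n_2\mapsto n_2-i$, $r\mapsto r-1$ turns the sum into
\[
\Bigl(\sum_{i\ge1}(x_1x_2)^i\Bigr)\,y\,F \;=\; \frac{y\,x_1x_2}{1-x_1x_2}\,F.
\]
For the second, the additional substitution $s\mapsto s-j$ introduces the geometric factor $\sum_{j\ge1}(x_2z)^j=\frac{x_2z}{1-x_2z}$, so the contribution is $\tfrac{y\,x_1x_2}{1-x_1x_2}\cdot\tfrac{x_2z}{1-x_2z}\,F$. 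The third sum is symmetric, contributing $\tfrac{y\,x_1x_2}{1-x_1x_2}\cdot\tfrac{x_1z}{1-x_1z}\,F$. I would then write a single equation
\[
F \;=\; 1 \;+\; \frac{y\,x_1x_2}{1-x_1x_2}\!\left[1+\frac{x_1z}{1-x_1z}+\frac{x_2z}{1-x_2z}\right]\!F.
\]

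The only real calculation is to simplify the bracket. Putting the three terms over the common denominator $(1-x_1z)(1-x_2z)$ and expanding, the cross terms $\pm x_1z\mp x_2z$ cancel and two copies of $x_1x_2z^2$ are subtracted, leaving the numerator $1-x_1x_2z^2$. Thus the bracket equals $(1-x_1x_2z^2)/\bigl((1-x_1z)(1-x_2z)\bigr)$. Solving the resulting linear equation $F=1+AF$ for $F$ gives $F=1/(1-A)$; clearing denominators produces exactly $F=G/H$ with $G$ and $H$ as stated.

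The bookkeeping step is routine but requires care: I expect the main (only) obstacle to be checking the bracket simplification and verifying that the index shifts correctly cover the ranges (in particular, that the boundary cases with $r=0$ produce exactly the constant $1$ and nothing else, so that the recursion, applied for $r\ge 1$, can be summed without extra correction terms). Once those two items are verified, the identity $F=G/H$ follows directly.
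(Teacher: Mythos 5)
Your proposal is correct and follows essentially the same route as the paper: multiply the recursion by $x_1^{n_1}x_2^{n_2}y^r z^s$, sum over all indices, extract the boundary constant $1$, recognize the three contributions as $F$ times geometric series in $x_1x_2$, $x_1z$, $x_2z$, and solve the resulting linear equation $F=1+AF$. The bracket simplification to $(1-x_1x_2z^2)/\bigl((1-x_1z)(1-x_2z)\bigr)$ that you flag as the one computational step is exactly the algebra the paper performs (implicitly) to arrive at the stated $G/H$.
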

	
	\begin{proof}
		{\footnotesize
			\begin{align*}
				&F(x_1,x_2,y,z)\\
				&= \sum_{n_1,n_2,r,s \geq 0}N(n_1,n_2,r,s)x_{1}^{n_{1}}x_{2}^{n_{2}}y^rz^s \\ 
				&= \sum_{n_1,n_2,r,s \geq 0}\sum_{i \geq 1}N(n_1-i,n_2-i,r-1,s)x_{1}^{n_1}x_{2}^{n_2}y^rz^s \\ 
				&\hspace{2mm} + \sum_{n_1,n_2,r,s \geq 0}\sum_{i \geq 1}\sum_{j \geq 1}N(n_1-i,n_2-i-j,r-1,s-j)x_{1}^{n_1}x_{2}^{n_2}y^rz^s \\ 
				&\hspace{2mm} + \sum_{n_1,n_2,r,s \geq 0}\sum_{i \geq 1}\sum_{j \geq 1}N(n_1-i-j,n_2-i,r-1,s-j)x_{1}^{n_1}x_{2}^{n_2}y^rz^s
				\\ &=  1 + F(x_1,x_2,y,z)(y\sum_{i \geq 1}(x_1x_2)^i)(1 + \sum_{j \geq 1}(x_1z)^j + \sum_{j \geq 1}(x_2z)^j)\,.
			\end{align*}
		}
		Hence,
		{\footnotesize
		\begin{align*}
		\hspace{-1mm} F(x_1,x_2,y,z)
			&= \frac{1}{1-(y\sum_{i \geq 1}(x_1x_2)^i)(1 + \sum_{j \geq 1}(x_1z)^j + \sum_{j \geq 1}(x_2z)^j)} \\ &= \frac{(1-x_1x_2)(1-x_1z)(1-x_2z)}{(1-x_1x_2)(1-x_1z)(1-x_2z) - yx_1x_2(1-x_1x_2z^2)}\,.
		\end{align*}
		}
	\end{proof}

	From Theorem~\ref{thm:acsv}, with $n_1=n_2=n, r= \rho n, s = \delta n$, we solve the following system of equations.  We denote the partial derivates $\frac{\partial H}{\partial x}$ as $H_x$.
	\begin{equation}\label{eq:stickH}
			H = 0 \text{\quad and\quad}
			x_{1}H_{x_1} = x_{2}H_{x_2} = \frac{yH_{y}}{\rho} = \frac{zH_{z}}{\delta} 
	\end{equation}
	
	\begin{lemma}\label{lem:solveH_sticky}
		The solution of the equation system (\ref{eq:stickH}) is
		{\footnotesize
		\begin{align*}
			x^*(\delta) &= x_{1}^{*}(\delta) = x_2^{*}(\delta) =  \sqrt{1 - \frac{2\rho}{2-\delta}},\\
			z^*(\delta) &= \frac{\sqrt{\rho^2 + \delta^2} - \rho}{x^*\delta},\\
			y^*(\delta) &= 2\frac{\sqrt{\rho^2 + \delta^2} - \delta}{2-\delta - 2\rho}  
		\end{align*}
		}
	\end{lemma}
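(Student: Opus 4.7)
The plan is to exploit the symmetry of $H$ in the pair $(x_1,x_2)$. Since $H(x_1,x_2,y,z)=H(x_2,x_1,y,z)$, any point with $x_1=x_2$ automatically satisfies the constraint $x_1 H_{x_1}=x_2 H_{x_2}$, so I would search for a solution of the form $x_1^*=x_2^*=x^*$. The uniqueness of the positive solution guaranteed by Theorem~\ref{thm:acsv} then ensures that this symmetric ansatz delivers \emph{the} critical point.

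After substituting $x_1=x_2=x$ into $H$ and using $1-x^2z^2=(1-xz)(1+xz)$, the denominator factors as
\[ H = (1-xz)\bigl[(1-x^2)(1-xz) - yx^2(1+xz)\bigr]. \]
Discarding the boundary branch $1-xz=0$, the equation $H=0$ collapses to
\[ y x^2 (1+xz) = (1-x^2)(1-xz), \]
which expresses $y$ as a rational function of $(x,z)$. I would then compute the logarithmic derivatives $x H_x$, $y H_y$, $z H_z$ on the symmetric locus and use this relation to eliminate $y$ wherever it appears. With the abbreviations $a:=x^2$ and $b:=xz$, the two remaining constraints $\rho\cdot xH_x = yH_y$ and $\delta\cdot xH_x = zH_z$ reduce to the polynomial system
\begin{align*}
 (1-a-\rho)(1-b^2) &= \rho\, b\,(1-a),\\
 \delta(1-b^2) &= (2-\delta)\, b\,(1-a).
\end{align*}

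Dividing the first equation by the second cancels the common factor $b(1-a)$ and yields the linear relation $1-a-\rho = \rho\delta/(2-\delta)$, whence $a = 1 - 2\rho/(2-\delta)$; this is the claimed $x^*$. Back-substituting into the second equation produces the quadratic $\delta b^2 + 2\rho b - \delta = 0$, whose positive root $b = (\sqrt{\rho^2+\delta^2}-\rho)/\delta$ recovers $z^*$. Finally, feeding $a$ and $b$ into the $H=0$ relation reduces, after setting $\mu:=\sqrt{\rho^2+\delta^2}$, to the identity $\rho(\rho+\delta-\mu)=(\mu-\delta)(\mu+\delta-\rho)$, which follows directly from $\mu^2=\rho^2+\delta^2$, yielding the stated $y^*$.

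The principal obstacle is the bookkeeping in the derivative computation: both $xH_x$ and $zH_z$ acquire the common factor $(1-xz)/(1+xz)$ only after the $H=0$ relation is used to rewrite $yx^2$, and it is precisely this simplification that makes the system decouple so cleanly, with $a$ determined first and $b$ then falling out of a single quadratic. Once this reduction is in hand, only routine linear and quadratic manipulations remain.
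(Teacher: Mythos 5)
Your proof is correct and follows essentially the same algebraic path as the paper's: establish $x_1=x_2$, factor $H$ on the diagonal, use $H=0$ to eliminate $y$, and reduce the remaining logarithmic-derivative equations to a linear relation for $x^2$ and a quadratic for $xz$. The one tidy improvement is your symmetry-plus-uniqueness argument for $x_1^*=x_2^*$, which cleanly replaces the paper's computation $x_1H_{x_1}-x_2H_{x_2}=-z(x_1-x_2)(1-x_1x_2)$ followed by a case analysis ruling out the branches $z=0$ and $x_1x_2=1$.
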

	Applying \eqref{eq:asym_approx}, we have that
	\begin{align*}
	\lim_{n \to \infty} \frac{\log N(n,n,\rho n,\delta n)}{n} &= -2\log x^*(\delta) - \rho \log y^*(\delta) \\ & \hspace{3mm} - \delta \log z^*(\delta)\,.  
	\end{align*}
	
		From Lemma \ref{Lem:d-conf}, with $\beta n$ correctable errors, we need to consider the total ball size with distance $\delta n$ where $\delta=2\beta$. This quantity is $|T(n,\rho n,\delta n)| = \sum_{s=0}^{\delta n}N(n,n,\rho n, s)$. Hence, we have that 
	{\small 
	\begin{align*}
	&\widetilde{T}(\rho,\delta) \\
	& = \max_{0\le \beta_1 \le \beta} -2\log x^*(2\beta_1) - \rho \log y^*(2\beta_1) - 2\beta_1 \log z^*(2\beta_1) \\
	& = \begin{cases}
	-2\log x^*(2\beta) - \rho \log y^*(2\beta) - 2\beta \log z^*(2\beta)\,, &\hspace{-3mm} \text{if }\beta\le \beta_{\rm{max}}\,,\\
	2\HH(\rho)\,, & \hspace{-3mm}\text{if }\beta\ge \beta_{\rm{max}}\,.
	\end{cases}
	\end{align*}
	}
	Here, $\beta_{\rm{max}}=(1-\rho)/(2-\rho)$. 
	
	In conclusion, we have the following explicit formula for the asymptotic ball size.
	
\begin{corollary}\label{cor:rate_sticky}
For fixed $\rho$, set $\beta_{\rm{max}}=(1-\rho)/(2-\rho)$.
When $\beta\leq\beta_{\rm{max}}$, we have
\begin{align*}
\widetilde{T}(\rho,2\beta) &= -\rho + 2\beta \log(2\beta) - \rho \log(\sqrt{\rho^2+4\beta^2} - 2\beta) \\
					&\hspace{4mm}- 2\beta \log(\sqrt{\rho^2+4\beta^2}-\rho) \\
					&\hspace{4mm} +(-1+\rho+\beta) \log(2-2\rho-2\beta) \\
					&\hspace{4mm}+ (1-\beta)\log(2-2\beta)
\end{align*}

Otherwise, when  $\beta>\beta_{\rm{max}}$, we have $\widetilde{T}(\rho,\beta)=2\HH(\rho)$.
\end{corollary}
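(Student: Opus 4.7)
The plan is to substitute the saddle point values $(x^*(\delta), y^*(\delta), z^*(\delta))$ from Lemma~\ref{lem:solveH_sticky} with $\delta = 2\beta$ into the formula
\[
\widetilde{T}(\rho, 2\beta) \;=\; -2\log x^*(2\beta) - \rho \log y^*(2\beta) - 2\beta \log z^*(2\beta),
\]
which is valid for $\beta \le \beta_{\rm{max}}$ by the preceding analysis, and then expand each logarithm and collect terms to recover the claimed closed form. The second case $\beta \ge \beta_{\rm{max}}$ is already asserted in the paragraph preceding the corollary, where the inner maximization over $\beta_1$ saturates at $2\HH(\rho)$; this is compatible with the unconditional upper bound $\widetilde{T}(\rho, 2\beta) \le 2\bCap(\rho) = 2\HH(\rho)$, which follows from $|T(n,\rho n, 2\beta n)| \le |\bbS(n,\rho n)|^2$ and Proposition~\ref{prop:run-cap}, so nothing further is required there.

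Concretely, I would first rewrite the three roots as $x^*(2\beta) = \sqrt{(2-2\beta-2\rho)/(2-2\beta)}$, $y^*(2\beta) = 2(\sqrt{\rho^2+4\beta^2} - 2\beta)/(2-2\beta-2\rho)$, and $z^*(2\beta) = (\sqrt{\rho^2+4\beta^2} - \rho)/(2\beta \, x^*(2\beta))$. After taking base-$2$ logarithms and summing the three contributions, the result is linear in the five logarithmic quantities $\log(2-2\beta)$, $\log(2-2\beta-2\rho)$, $\log(\sqrt{\rho^2+4\beta^2}-2\beta)$, $\log(\sqrt{\rho^2+4\beta^2}-\rho)$, and $\log(2\beta)$, together with a single constant coming from the factor of $2$ in the numerator of $y^*$. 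Routine collection of coefficients matches the claimed formula; in particular, the constant $-\rho$ arises from the $-\rho \log_2 2 = -\rho$ contribution inside $-\rho \log y^*$, while the nontrivial cancellation $2\beta \log x^*$ coming from $-2\beta \log z^*$ redistributes cleanly between the $\log(2-2\beta)$ and $\log(2-2\beta-2\rho)$ columns.

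The main obstacle I anticipate is purely bookkeeping: although each individual logarithm expansion is elementary, tracking the coefficient of each of the five distinct logarithmic terms (plus the constant) across three summands is error-prone. To guard against slips, I would sanity-check the output numerically at a convenient reference point such as $\rho = 1/2$, $\beta = \beta_{\rm{max}} = 1/3$, where the derived expression must return $2\HH(1/2) = 2$, before committing to the symbolic derivation. This same numerical check incidentally verifies continuity of $\widetilde{T}(\rho, 2\beta)$ at $\beta = \beta_{\rm{max}}$ and thus consistency of the two cases of the corollary.
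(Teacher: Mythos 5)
Your proposal is correct and takes essentially the same route as the paper's Appendix B: substitute the saddle-point triple $(x^*,y^*,z^*)$ from Lemma~\ref{lem:solveH_sticky} into $\widetilde{T}(\rho,\delta) = -2\log x^* - \rho\log y^* - \delta\log z^*$, expand each logarithm, collect coefficients, and set $\delta = 2\beta$; the constant $-\rho$ indeed comes from the factor of $2$ in $y^*$, and the $\delta\log x^* = \frac{\delta}{2}[\log(2-2\rho-\delta)-\log(2-\delta)]$ contribution from $z^*$ redistributes into the two linear-log columns exactly as you describe. The one piece of the appendix's argument you do not reproduce is the derivation of $\beta_{\rm max} = (1-\rho)/(2-\rho)$, which the paper obtains by computing $\partial\widetilde{T}(\rho,2\beta)/\partial\beta = \log\!\left(\frac{4\beta^2(2-2\rho-2\beta)}{(2-2\beta)(\sqrt{\rho^2+4\beta^2}-\rho)^2}\right)$ and setting it to zero; you instead treat $\beta_{\rm max}$ as given and appeal to the unconditional bound $\widetilde{T}\le 2\HH(\rho)$ plus continuity. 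That is a defensible reading of the corollary's ``set $\beta_{\rm max}=\ldots$'' phrasing, and your check $\widetilde{T}(1/2,2/3)=2$ is exactly the continuity verification at the transition, but if the corollary is taken to carry the burden of showing that the one-sided optimization over $\beta_1$ actually saturates precisely at that threshold, you would still need the derivative computation (or some monotonicity argument) to close the loop.
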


	
Hence, $\lim_{n\to\infty} \frac{\log A_{SI}(n,\rho n,\beta n)}{n} \geq 2 \HH(\rho) - \widetilde{T}(\rho,2\beta)$.
Since $ A_{SI}(n,\beta n)=\sum_{r=1}^n A_{SI}(n, r, \beta n)$, 
we optimize the right-hand side over $0\le \rho\le 1$.
We have the following result.
	
\begin{proposition}\label{prop:gv-si}
For fixed $\beta > 0$, we have $\alpha_{SI}(\beta)\geq R^{(SI)}_{\rm{GV}}(\beta)$, where
$R^{(SI)}_{\rm{GV}}(\beta) \triangleq 2\HH(\rho) - \widetilde{T}(\rho,2\beta)$ and 
$\rho = \frac{3(1-\beta) - \sqrt{9\beta^2-2\beta+1}}{4}$.
\end{proposition}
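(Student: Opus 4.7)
The plan is to identify $\rho^\ast \triangleq [3(1-\beta) - \sqrt{9\beta^2-2\beta+1}]/4$ as the $\rho \in [0,1]$ that maximizes $R(\rho) \triangleq 2\HH(\rho) - \widetilde{T}(\rho, 2\beta)$. The remark preceding the proposition already reduces the claim to this optimization: $A_{SI}(n,\beta n) = \sum_{r=1}^{n} A_{SI}(n,r,\beta n) \ge \max_r A_{SI}(n,r,\beta n)$ gives $\alpha_{SI}(\beta) \ge \sup_{\rho}[2\HH(\rho) - \widetilde{T}(\rho, 2\beta)]$. Outside the regime $\rho < (1-2\beta)/(1-\beta)$ (equivalently $\beta \ge \beta_{\rm{max}}(\rho)$), Corollary~\ref{cor:rate_sticky} gives $\widetilde{T} = 2\HH(\rho)$ and hence $R \equiv 0$, so any positive maximum must lie inside the complementary interval $(0, (1-2\beta)/(1-\beta))$, where the explicit formula for $\widetilde{T}$ applies.

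Setting $A \triangleq \sqrt{\rho^2+4\beta^2}$ (so $dA/d\rho = \rho/A$), direct differentiation of the formula in Corollary~\ref{cor:rate_sticky} produces several $\rho$-dependent terms; the rational-function pieces collapse via the identity
\[
\frac{-\rho^2}{A(A-2\beta)} + \frac{2\beta}{A} = -1,
\]
which follows from $A^2 = \rho^2 + 4\beta^2$, while the $+1/\ln 2$ contributed by $\partial_\rho[(-1+\rho+\beta)\log(2-2\rho-2\beta)]$ cancels the remaining $\ln 2$ piece. What survives is $\partial_\rho\widetilde{T} = -1 + \log[2(1-\rho-\beta)/(A-2\beta)]$, so combining with $2\HH'(\rho) = 2\log[(1-\rho)/\rho]$ the first-order condition $dR/d\rho = 0$ rearranges to
\[
(1-\rho)^2\bigl(\sqrt{\rho^2+4\beta^2} - 2\beta\bigr) = \rho^2(1-\rho-\beta).
\]

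Isolating the radical and squaring yields, after extracting a trivial factor $\rho^2$, a cubic in $\rho$. Writing $u \triangleq 1-\rho$, this cubic takes the form $u^2(u-2\beta)^2 - \rho^2(u-\beta)^2$, and viewing it as a difference of squares produces the factorization
\[
\bigl[2\rho^2 - 3(1-\beta)\rho + (1-2\beta)\bigr]\bigl[(1-2\beta) - (1-\beta)\rho\bigr] = 0.
\]
The linear factor vanishes on the boundary $\rho = (1-2\beta)/(1-\beta)$, where $R=0$; the quadratic yields $\rho = [3(1-\beta) \pm \sqrt{9\beta^2-2\beta+1}]/4$. A sanity check at $\beta = 0$ (the minus branch returns $1/2$, the plus branch returns $1$) identifies the minus branch as the unique root lying strictly inside $(0, (1-2\beta)/(1-\beta))$. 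Since $R$ vanishes at both endpoints of this interval and $\rho^\ast$ is the only interior critical point, it must be the maximizer. The main obstacle is purely algebraic: neither the collapse in $\partial_\rho\widetilde{T}$ nor the clean factorization of the squared equation is transparent without exploiting $A^2 - \rho^2 = 4\beta^2$, and the substitution $u = 1-\rho$ is what turns the resulting polynomial identity into a textbook difference of squares.
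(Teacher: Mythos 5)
Your proposal is correct and follows essentially the same route as the paper: both reduce the statement to maximizing $R(\rho) = 2\HH(\rho) - \widetilde{T}(\rho,2\beta)$ over $\rho$, differentiate in $\rho$ using the identity $\frac{\rho^2}{A(A-2\beta)} - \frac{2\beta}{A} = 1$ with $A = \sqrt{\rho^2+4\beta^2}$ to collapse the rational pieces, and arrive at the first-order condition $\log\bigl[(1-\rho)^2(A-2\beta)/\bigl(\rho^2(1-\rho-\beta)\bigr)\bigr] = 0$. Your extra detail --- isolating the radical, squaring, passing to $u = 1-\rho$, and reading off the difference-of-squares factorization $[u(u-2\beta)-\rho(u-\beta)][u(u-2\beta)+\rho(u-\beta)]$ that produces the boundary root $\rho=(1-2\beta)/(1-\beta)$ and the quadratic $2\rho^2-3(1-\beta)\rho+(1-2\beta)=0$ --- reproduces exactly the two critical values the paper simply states, and your $\beta=0$ sanity check correctly selects the minus branch. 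This last point is worth flagging: the paper's appendix lists the maximizing root with a $+$ sign, which contradicts the proposition statement and your (correct) calculation; your route happens to guard against that slip. The only soft spot, in both your argument and the paper's, is the unproved uniqueness of the interior critical point (you assert it after checking one root leaves the interval), but this is not a gap relative to what the paper itself establishes.
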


\begin{remark}
	Proposition~\ref{prop:gv-si} states that for all $\beta<1/2$, there exists a family of $(n,\beta n)$-sticky-insertion codes with a positive rate. 
	Since a code that corrects $b$ sticky-insertions also corrects $b$ run-preserving deletions (see for example, \cite{kovavcevic2018}), we also have a family of $(n,\beta n)$-run-preserving-deletion codes with positive rates whenever $\beta<1/2$. 
	We emphasize the situation is different for general deletion-correcting codes. Recently, the authors in~\cite{Guruswami2022.deletionthreshold} showed that there exists $\beta^*<1/2$ such the rate of any $(n,\beta^* n)$-deletion-correcting code must be zero.
\end{remark}

\subsection{Numerical Plots}

\begin{figure}[t!]
	\begin{center}
		\includegraphics[width=0.8\linewidth]{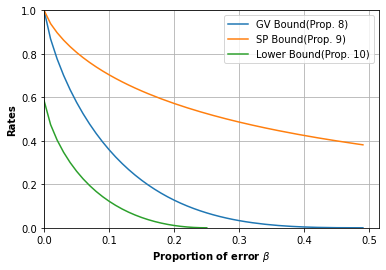}
	\end{center}
	\caption{Bounds for $\alpha_{SI}(\beta)$ for the sticky-insertion channel.}
	\vspace{-2mm}
	\label{fig:sticky}
\end{figure}

In this section, we compare the bound in Proposition~\ref{prop:gv-si} with a sphere-packing bound and a simpler lower bound.

\vspace{1mm}

\noindent{\em Sphere-Packing Bound}. Given $\bu\in \bbS(n,r)$, the resulting output $\bv$ with $b$ sticky insertions belongs to $\bbS(n+b,r)$. 
Furthermore, $\bv$ belongs to the set 
$\{\bv' \in \bbS(n+b,r): v_i' \ge u_i \text{ for all } i \}$
of size $\binom{r+b-1}{r-1}$. 
Therefore, the sphere-packing bound is $$A_{SI}(n,b) \leq \sum_{r=1}^{n-1} 2\frac{\binom{n+b-1}{r-1}}{\binom{r+b-1}{r-1}}.$$
Asymptotically, we have the following proposition.

\begin{proposition}\label{prop:sp-si}
	For fixed $\beta > 0$, we have that $\alpha_{SI}(\beta)\leq R^{(SI)}_{\rm{SP}}(\beta)$, where
	$R^{(SI)}_{\rm{SP}}(\beta) \triangleq (1+2\beta)(1-\HH(\frac{1+\beta}{1+2\beta}))$.
\end{proposition}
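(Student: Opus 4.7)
The plan is to first justify the displayed sphere-packing bound, then extract its exponential rate, and finally maximize over the number of runs.

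For the bound itself, fix an $(n,r,b)$-sticky-insertion code $C$. Given $\bu\in C$, the set of words reachable from $\bu$ by exactly $b$ sticky insertions is in bijection with the ways to distribute $b$ additional units among the $r$ runs of $\bu$, which by stars-and-bars has size $\binom{r+b-1}{r-1}$. By Definition~\ref{def:confusable}, these reachable sets are pairwise disjoint as $\bu$ ranges over $C$, and every reachable word lies in $\bbS(n+b,r)$, a set of size $\binom{n+b-1}{r-1}$. Hence $A_{SI}(n,r,b)\,\binom{r+b-1}{r-1}\le 2\binom{n+b-1}{r-1}$ (the factor $2$ accounting for the two binary words sharing a run-length vector), and summing over $r$ yields the displayed bound.

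Next I would take logs and divide by $n$. Setting $r=\rho n$ and $b=\beta n$, the sum has only $O(n)$ terms, which contribute $O((\log n)/n)$, so the limit is dominated by the largest summand. Using the standard estimate $\binom{aN}{bN}\sim 2^{Na\HH(b/a)}$, the exponential rate of the $\rho$-th term equals
\begin{equation*}
f(\rho)\;=\;(1+\beta)\HH\!\Big(\tfrac{\rho}{1+\beta}\Big)\;-\;(\rho+\beta)\HH\!\Big(\tfrac{\rho}{\rho+\beta}\Big).
\end{equation*}
Expanding via $N\HH(x/N)=-x\log x-(N-x)\log(N-x)+N\log N$, the two $-\rho\log\rho$ contributions cancel and one obtains the compact form
\begin{equation*}
f(\rho)\;=\;-(1{+}\beta{-}\rho)\log(1{+}\beta{-}\rho)-(\rho{+}\beta)\log(\rho{+}\beta)+(1{+}\beta)\log(1{+}\beta)+\beta\log\beta.
\end{equation*}

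Differentiating gives $f'(\rho)=\log\frac{1+\beta-\rho}{\rho+\beta}$, which is strictly decreasing in $\rho$ and vanishes exactly at $\rho^{*}=1/2$; hence $\rho^{*}=1/2$ is the unique maximizer on $[0,1]$. Substituting yields
\begin{equation*}
f(1/2)\;=\;-(1+2\beta)\log\!\tfrac{1+2\beta}{2}+(1+\beta)\log(1+\beta)+\beta\log\beta,
\end{equation*}
and a direct rearrangement (splitting the first log and recognizing the binary entropy of $p=(1{+}\beta)/(1{+}2\beta)$ with $1-p=\beta/(1+2\beta)$) identifies this with $(1+2\beta)\bigl(1-\HH\bigl(\tfrac{1+\beta}{1+2\beta}\bigr)\bigr)$, proving the claim. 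I expect the only real bookkeeping hurdle is this last algebraic identification; everything else is a routine application of Stirling-type binomial asymptotics and a one-variable optimization.
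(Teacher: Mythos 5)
Your proof is correct and follows essentially the same route as the paper: derive the sphere-packing bound by counting the disjoint "reachable" sets of size $\binom{r+b-1}{r-1}$ inside $\bbS(n+b,r)$, pass to exponential rates via binomial entropy asymptotics, maximize the resulting $f(\rho)$ at $\rho^*=1/2$, and algebraically rewrite $f(1/2)$ as $(1+2\beta)\bigl(1-\HH\bigl(\tfrac{1+\beta}{1+2\beta}\bigr)\bigr)$. The only (welcome) addition beyond the paper's own write-up is that you explicitly tie the disjointness of the forward balls to Definition~\ref{def:confusable}, a step the paper leaves implicit.
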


\vspace{2mm}

\noindent{\em Simpler Lower Bound}.
We describe a crude upper bound on the total ball size $|T(n,r,d)|$, leading to a simpler lower bound on $\alpha_{SI}(\delta)$. To this end, we recall that $T(n,r,d) = \{(\bu,\bv) \in \bbS(n,r)^2: D(\bu,\bv) \leq d\}$ and this is a subset of $\{(\bu,\bv) \in \bbS(n,r)\times \cup_{i=-b}^{b}\cS(n+i,r): D(\bu,\bv) \leq d\}$. This new quantity can be obtained by distributing $d$ into $r$ parts as $(d_1,d_2,\ldots,d_r)$ and then assigning $+$ or $-$ to each coordinate. Hence, $|T(n,r,d)| \le 2^r\binom{d+r-1}{r-1}$.
As before, we set $\delta=2\beta$ and we have a weaker lower bound.


\begin{proposition}\label{prop:lb-si}
For fixed $\beta > 0$, we have $\alpha_{SI}(\beta)\geq 2\beta-1 - (1+2\beta)\log(\frac{1+2\beta}{3}) + 2\beta\log \beta$.
\end{proposition}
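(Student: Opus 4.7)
The plan is to plug the crude upper bound $|T(n,r,d)|\le |\bbS(n,r)|\cdot 2^r\binom{d+r-1}{r-1}$ (justified below) into the GV formula \eqref{eq:gv}, pass to asymptotic rates, and solve a clean one-variable optimisation over $\rho$. Concretely, to justify the per-centre count, I fix $\bu\in\bbS(n,r)$ and note that every $\bv\in V(\bu,d)$ has the form $\bv=\bu+(d_1,\ldots,d_r)$ with $(d_1,\ldots,d_r)\in\mathbb{Z}^r$ and $\sum_i|d_i|\le d$, once one discards the length-preservation constraint $\sum_i d_i=0$ and the positivity $u_i+d_i\ge 1$, both of which can only reduce the count. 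A tuple with $\sum_i|d_i|=k$ is determined by the composition $(|d_1|,\ldots,|d_r|)$ of $k$ into $r$ non-negative parts together with a sign pattern in $\{\pm\}^r$, giving at most $2^r\binom{k+r-1}{r-1}$ options. Summing over $k\le d$ and then over $\bu$ yields the claimed bound on $|T(n,r,d)|$.

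Taking asymptotic rates with $r=\rho n$ and $d=2\beta n$ gives
\[
\widetilde T(\rho,2\beta)\ \le\ \HH(\rho)+\rho+(\rho+2\beta)\HH\bigl(\tfrac{\rho}{\rho+2\beta}\bigr),
\]
so substituting into \eqref{eq:gv} and retaining a single summand of $A_{SI}(n,\beta n)=\sum_r A_{SI}(n,r,\beta n)$ produces
\[
\alpha_{SI}(\beta)\ \ge\ \sup_{0<\rho<1}\Bigl[\HH(\rho)-\rho-(\rho+2\beta)\HH\bigl(\tfrac{\rho}{\rho+2\beta}\bigr)\Bigr].
\]
Expanding both entropies and differentiating in $\rho$, the $\rho\log\rho$ contributions cancel and the critical-point equation collapses to $\log\bigl((1-\rho)/(\rho+2\beta)\bigr)=1$, i.e.\ $\rho^*=(1-4\beta)/3$, which lies in $(0,1)$ for every $\beta<1/4$; concavity of the objective makes $\rho^*$ the global maximiser on $(0,1)$.

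Finally, I would substitute $\rho^*$ back into the objective. Using the identities $1-\rho^*=(2+4\beta)/3$, $\rho^*+2\beta=(1+2\beta)/3$, $\log\bigl(2(1+2\beta)/3\bigr)=1+\log\bigl((1+2\beta)/3\bigr)$, and $\log(2\beta)=1+\log\beta$, the four surviving logarithmic terms should collapse into exactly $2\beta-1-(1+2\beta)\log\bigl((1+2\beta)/3\bigr)+2\beta\log\beta$, which matches the stated bound. The only delicate step is this final algebraic simplification: the two distinct $\log((1+2\beta)/3)$-coefficients $(2+4\beta)/3$ and $(1+2\beta)/3$ must be merged into the single coefficient $(1+2\beta)$, and the $2\beta$ coming from $\log(2\beta)=1+\log\beta$ must be tracked against the constant $-1$ to produce the $2\beta-1$ offset; once this bookkeeping is handled, the proof is complete.
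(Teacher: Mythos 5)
Your proposal is correct and takes essentially the same route as the paper: bound $|T(n,r,d)|$ by $|\bbS(n,r)|\cdot 2^r\binom{d+r-1}{r-1}$, pass to the asymptotic objective $\HH(\rho)-\rho-(\rho+2\beta)\HH\bigl(\tfrac{\rho}{\rho+2\beta}\bigr)$, set the derivative $\log\bigl(\tfrac{1-\rho}{\rho+2\beta}\bigr)-1$ to zero to get $\rho^*=(1-4\beta)/3$, and substitute back. You in fact supply slightly more care than the paper in justifying the per-centre count (relaxing the sum-zero and positivity constraints) and in checking concavity and the range $\beta<1/4$ for interior optimality.
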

In Figure~\ref{fig:sticky}, the GV bound obtained from the sharp estimate of $|T(n,r,d)|$ is significantly larger than
the bound obtained by the simple lower bound of $|T(n,r,d)|$.

\section{GV Bound for the Synthesis Channel}\label{sec:Synthesis}

{
\begin{figure}[t!]
	\begin{center}
		\includegraphics[width=0.75\linewidth]{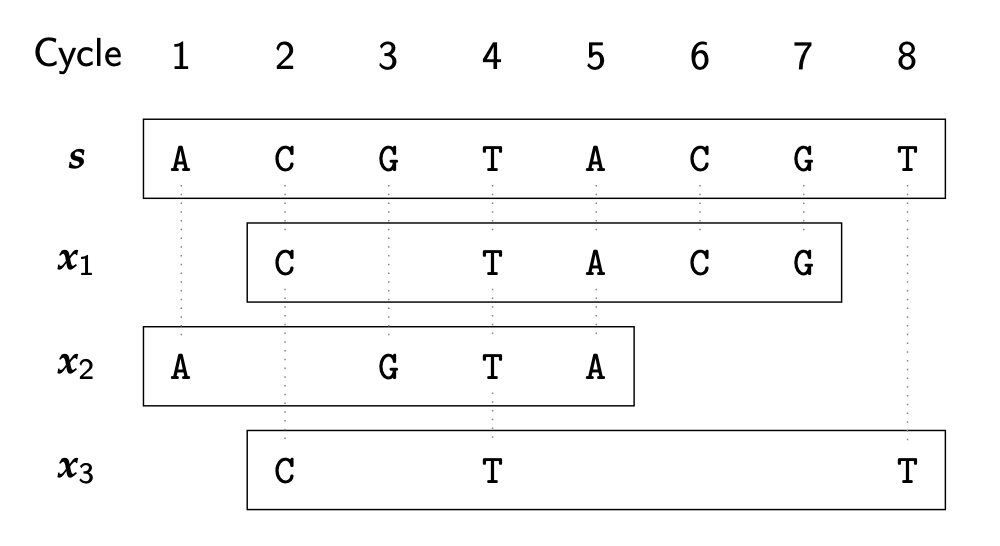}
	\end{center}
	\caption{Synthesis of three strands $\bx_1=\C\T\A\C\G$, $\bx_2=\A\G\T\A$, and $\bx_3=\C\T\T$ using the supersequence $\bs=(\A\C\G\T~\A\C\G\T)$. The strand $\bx_1$ is synthesized in cycles $2,4,5,6,7$, the strand $\bx_2$ is synthesized in cycles $1,3,4,5$ and $\bx_3$ is in cycles $2,4,8$.}
	\label{fig:syn_process}
\end{figure}
This section describes the procedure to compute the GV bound for the {\em DNA synthesis} channel. In this section, $\Sigma = \{\A,\C,\G,\T\}$.
Recently, DNA has emerged as a next-generation data storage medium because of its unprecedented density, durability, and replication efficiency \cite{yazdi2015dna}. This work considers the synthesis process, which is essential in embedding data into DNA. 
In particular, digital data is preprocessed and encoded in physical DNA molecules using synthesis machines. Iterating through a fixed supersequence $\bs = (s_1,s_2, \ldots ) \in \Sigma^*$ of nucleotides, 
the machine produces several DNA strands $\bx_1, \bx_2, \ldots \in \Sigma^*$ in parallel. These strands can be of equal or different lengths. 
In this paper, we focus on same-length strands $\bx_i\in \Sigma^n$. 
At each iteration/cycle, the machine either attaches $s_j$ to $\bx_i$ or not. 
Therefore, a DNA strand $\bx$ can be synthesized in $\cT$ cycles using the synthesis sequence $\bs$ if and only if $\bx$ is a subsequence of $(s_1,s_2, \ldots, s_{\cT})$. Figure~\ref{fig:syn_process} is an example of the synthesis process \cite{len2020coding}.
We consider sets of DNA strands so that the number of cycles needed to produce them is minimized. 


Formally, we consider the constrained space $\cS(n,\leq\!\cT)$ that comprises all length-$n$ subsequences of $(s_1,s_2, \ldots, s_{\cT})$. 
This coding problem was introduced by Lenz {\em et al}.~\cite{len2020coding} and {follow-up work include~\cite{Elishco2022, lenz2021multivariate, Makarychev2021}}.
In \cite{lenz2021multivariate}, the authors used multivariate combinatorics to determine the capacity of $\cS(n,\le \!\cT)$ and 
showed that the capacity is maximized when $\bs$ is an alternating sequence 
that cyclically repeats all symbols in $\Sigma$ in ascending order. 
Hence, this paper sets the supersequence $\bs$ to be the alternating sequence over the quaternary alphabet $(\A\C\G\T\A\C\G\T\ldots)$. For the fixed periodic supersequence, the synthesis time of a nucleotide is completely determined by its previous position. Hence, starting with an empty strand, the time to add a new $\A,\C,\G,\T$ to $\bx$ is $1,2,3,4$ respectively. For example, in Figure~\ref{fig:syn_process}, $\bx_1$ needs 7 cycles to be completely synthesized. It takes 2 cycles to get the first $\C$, 2 more cycles to go from $\C$ to $\T$, and so on. The number of cycles can be computed iteratively as $2+(4-2)+1+(2-1)+(3-2)$. We note that $\bx_1\in \cS(5,7)$, $\bx_2\in \cS(4,5)$, and $\bx_3\in \cS(3,8)$.}

The following proposition obtained by Lenz~{\em et al.} will be useful for the GV bound.
\begin{proposition}[{\cite[Proposition~6.7]{lenz2021multivariate}}]\label{prop:synthesis-cap}
	Fix $\tau$ and define $\bCap(\tau) \triangleq \lim_{n\to\infty} \frac{\log |\cS(n,\le \floor{\tau n})|}{n} $. Then
\begin{align*}
	\bCap(\tau) 
	& = \max_{0\le \tau_1 \le \tau} -\log \bar{x} - \tau_1 \log \bar{y} \\
	& = \begin{cases}
		-\log \bar{x} - \tau \log \bar{y} \,, & \text{ if }\tau < 5/2\,,\\
		2 \,, & \text{ otherwise }.
	\end{cases}
\end{align*}
Here, $\bar{x} = \frac{1}{\bar{y}+\bar{y}^2+\bar{y}^3+\bar{y}^4}$ and
$\bar{y}$ is the unique real root of polynomial $(4-\tau)y^3 + (3-\tau)y^2 + (2-\tau)y + (1-\tau) = 0$.
\end{proposition}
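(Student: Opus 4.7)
The plan is to derive a bivariate generating function for $|\cS(n, \le T)|$ and then apply Theorem~\ref{thm:acsv}. The key structural observation is that, for the alternating supersequence $\bs = \A\C\G\T\A\C\G\T\cdots$, if the most recent symbol of $\bx$ was synthesized at position $k$ of $\bs$, then because $(s_{k+1}, s_{k+2}, s_{k+3}, s_{k+4})$ is a permutation of $(\A,\C,\G,\T)$, appending any of the four nucleotides costs exactly one of $1,2,3,4$ additional cycles, with a bijection between the choice of cost and the choice of nucleotide; the same holds for the first symbol synthesized from the empty strand. Letting $a_{n,T}$ denote the number of length-$n$ strands with minimum synthesis time exactly $T$ and using $x, y$ as markers for strand length and cycles respectively, every symbol of $\bx$ contributes a factor $x(y+y^2+y^3+y^4)$, so
\[
\sum_{n,T\ge 0} a_{n,T}\, x^n y^T \;=\; \frac{1}{1 - x(y+y^2+y^3+y^4)}\,.
\]
Multiplying by $1/(1-y)$ converts to partial sums in $T$, yielding the rational generating function for $|\cS(n, \le T)|$ with $G(\vz) = 1$ and $H(\vz) = (1-y)\bigl(1 - x(y+y^2+y^3+y^4)\bigr)$.

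Next I would invoke Theorem~\ref{thm:acsv} with $\ell = 2$, $(z_1, z_2) = (x, y)$, and $\vk = (n, \floor{\tau n})$. The relevant critical point lies on the smooth sheet $1 - x(y+y^2+y^3+y^4) = 0$. The partial-derivative condition $\tau\, x H_x = y H_y$, evaluated on this sheet, collapses to $\tau(y+y^2+y^3+y^4) = y(1+2y+3y^2+4y^3)$, which rearranges (after dividing by $y$ and negating) to the cubic $(4-\tau)y^3 + (3-\tau)y^2 + (2-\tau)y + (1-\tau) = 0$. Uniqueness of a positive real root $\bar y$ for $\tau \in (1, 4)$ follows from Descartes' rule of signs, which yields exactly one sign change for every such $\tau$. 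Setting $\bar x = 1/(\bar y+\bar y^2+\bar y^3+\bar y^4)$ and invoking equation~\eqref{eq:asym_approx}, the log-rate of $a_{n,\floor{\tau n}}$ equals $-\log\bar x - \tau\log\bar y$. Since $|\cS(n, \le \floor{\tau n})|$ is a sum over single-shell counts, a standard Laplace-type collapse gives its log-rate as the supremum of the per-shell rate over $\tau_1 \le \tau$, which is exactly the stated maximum. An implicit-function computation on the cubic shows that this objective is strictly concave in $\tau_1$ and attains its unconstrained maximum exactly when $\bar y = 1$; substituting $y = 1$ in the cubic gives $10 - 4\tau_1 = 0$, i.e.\ $\tau_1 = 5/2$, at which $\bar x = 1/4$ and the rate equals $\log 4 = 2$. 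Consequently $\bCap(\tau) = 2$ for $\tau \ge 5/2$, while for $\tau < 5/2$ the strict monotonicity on $[0, 5/2)$ places the constrained optimum at the boundary $\tau_1 = \tau$.

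The main obstacle will be the technical verification of Theorem~\ref{thm:acsv}'s hypotheses: confirming that the identified critical point lies on a smooth and minimal portion of the singular variety of $H$, and that the extraneous factor $(1-y)$ does not contribute a competing critical point of smaller modulus in the regime $\tau < 5/2$. Once these are in place, the remaining arguments reduce to elementary algebra with the cubic together with the implicit differentiation needed for the concavity and monotonicity claims.
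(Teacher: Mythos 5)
Your proposal follows essentially the same route as the paper: set up the bivariate generating function with denominator $1 - x(y+y^2+y^3+y^4)$, apply Theorem~\ref{thm:acsv} to obtain $\bar x = 1/(\bar y + \bar y^2 + \bar y^3 + \bar y^4)$ and the cubic $(4-\tau)y^3 + (3-\tau)y^2 + (2-\tau)y + (1-\tau)=0$, and then maximize over $\tau_1 \le \tau$ to get the piecewise formula with the threshold at $\bar y = 1$, i.e.\ $\tau = 5/2$. The paper's appendix proof is terser (it only records the critical-point equations and the resulting cubic), whereas you additionally spell out the per-symbol derivation of the generating function, the Descartes-rule uniqueness of $\bar y$, and the Laplace-type reduction of the partial sum; these are welcome but do not change the underlying argument.
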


In this work, we introduce error-correcting capabilities to this constrained space.
In particular, we study codes that correct {\em substitution errors} and whose words have bounded synthesis time.
Formally, we say that $C \subseteq \cS(n, \le\!\cT)$ is a $(n,\le\!\cT,d)$-{\em synthesis code} if any pair of distinct $\bu,\bv \in C$ have Hamming distance at least $d$. 
As before, we want to determine $A_{SY}(n,\le\!\cT,d)$, the size of a largest $(n,\le\!\cT,d)$-synthesis code, 
and its asymptotic rate $\alpha_{SY}(\tau,\delta) = \lim_{n\to\infty} \frac{\log |A_{SY}(n,\le \floor{\tau n},\floor{\delta n})|}{n}$.

\subsection{Total Ball Size}


Specifically, we consider the set $T(n,d,\cT) = \{(\bu,\bv) \in \bbS(n,\le\!\cT)^2: D_H(\bu,\bv) \leq d\}$ and our task is to determine $\widetilde{T}(\tau,\delta) \triangleq \limsup_{n\rightarrow \infty} \frac{\log |T(n,\floor{\tau n},\floor{\delta n})|}{n}$.

To this end, we consider the quantity 
{
\footnotesize
\begin{align*}
	N(n,t,s) &\triangleq \left|\{(\bu,\bv) \in \bigcup_{t_1=0}^t \cS(n,=t_1) \times \cS(n,=t-t_1):D_H(\bu,\bv) = s\}\right|\,,
\end{align*}
}
\noindent Here, $\cS(n,t)$ denote the set of all length-$n$ quaternary sequences with synthesis time exactly $t$. Thus, $|T(n,d,\cT)|$ is upper bounded by the sum $\sum_{t=0}^{2\cT}\sum_{s=0}^d N(n,t,s)$.
Next, we have the following lemma that recursively computes $N(n,t,s)$.

\begin{lemma}\label{lem:generateN_synthesis}
{\small
	\begin{align*}
		N(n,t,s) &= \sum_{i = 1}^{4}N(n-1,t/2-i+ t/2-i,s) \\ 
		&\hspace{1mm} + 2\sum_{i = 1}^{3}\sum_{j = i+1}^{4}N(n-1,t/2-i+t/2-j,s-1) \,.
	\end{align*}
}
\end{lemma}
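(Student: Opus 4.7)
The plan is to establish the recursion by a direct case analysis: take any pair $(\bu,\bv)$ counted by $N(n,t,s)$ and condition on the synthesis cost of the final character of each string. Because the supersequence $\bs = (\A\C\G\T)^{\infty}$ has period~$4$ over the quaternary alphabet, the cost $c(u_{n-1},u_n)$ of appending the last character of $\bu$ always lies in $\{1,2,3,4\}$, and for any fixed previous state each of the four cost values is realized by a unique choice of last character; similarly for $\bv$. Let $i$ and $j$ denote these last-character costs of $\bu$ and $\bv$.

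If $i=j$, the length-$(n-1)$ prefixes of the pair carry total synthesis time $t-2i$, and the periodicity of $\bs$ implies $u_n=v_n$ precisely when $u_{n-1}=v_{n-1}$, so the Hamming distance of the truncation agrees with $s$. Summing over $i\in\{1,2,3,4\}$ produces the first term $\sum_{i=1}^{4}N(n-1,t-2i,s)$. If $i\neq j$, the prefixes carry total synthesis time $t-i-j$ and Hamming distance $s-1$; aggregating over unordered pairs $\{i,j\}\subset\{1,2,3,4\}$ introduces the factor of $2$ in front of $\sum_{1\le i<j\le 4}N(n-1,t-i-j,s-1)$.

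The main obstacle is to justify that the number of valid length-$(n-1)$ prefix pairs---which a~priori depends on the terminal states $u_{n-1}$ and $v_{n-1}$ of the prefixes---equals $N(n-1,\cdot,\cdot)$ with the arguments specified above. This is resolved by a shift-invariance argument stemming from the period-$4$ structure of $\bs$: cyclically relabeling both $\bu$ and $\bv$ by the same element of $\bbZ/4\bbZ$ leaves all incremental synthesis costs unchanged (by periodicity of $\bs$) and the Hamming distance unchanged (trivially), so the count of prefix pairs from each terminal-state configuration may be normalized to the count with terminal states $(0,0)$. Combining the two cases then yields the stated recursion.
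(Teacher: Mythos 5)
Your choice to condition on the \emph{last} positions is actually the sounder instinct for the time bookkeeping: dropping $u_n$ reduces $T(\bu)$ by exactly $c(u_{n-1},u_n)$, whereas the paper's sketch conditions on the first synthesized nucleotide, and dropping $u_1$ does not decrement the suffix's synthesis time by $c(\emptyset,u_1)$ (the cost of $u_2$ changes once $u_1$ is removed). So the two proofs take opposite ends, but they share the same fatal step, which yours makes explicit.

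The gap is in the Hamming-distance bookkeeping. You argue that when $i=j$ ``the periodicity of $\bs$ implies $u_n=v_n$ precisely when $u_{n-1}=v_{n-1}$, so the Hamming distance of the truncation agrees with $s$.'' The equivalence is correct, but the conclusion does not follow: when $u_{n-1}\neq v_{n-1}$ and $i=j$ we get $u_n\neq v_n$, so the prefixes have Hamming distance $s-1$, not $s$. Symmetrically, $i\neq j$ does not force $u_n\neq v_n$ (take $u_{n-1}=\A$, $v_{n-1}=\C$, $u_n=v_n=\G$, giving $i=2$, $j=1$). Hence $(n,t,s)$ is not a sufficient state; one must also carry whether the terminal characters agree, i.e.\ the offset $u_{n-1}-v_{n-1}\bmod 4$. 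Your shift-invariance step cannot close this gap, and in fact it is false as stated: cyclically relabeling $\Sigma$ preserves every internal increment $c(u_{k-1},u_k)$ but not the initial cost $c(\emptyset,u_1)$, so it does not preserve synthesis time. Concretely, direct enumeration of length-$2$ quaternary strings gives $N(2,10,1)=24$, while the stated recursion yields $\sum_i N(1,10-2i,1)+2\sum_{i<j}N(1,10-i-j,0)=4+4=8$. What the recursion (and the generating function that follows from it) actually counts is pairs whose \emph{sequences of incremental costs} differ in exactly $s$ coordinates; that is a different quantity from the Hamming distance between $\bu$ and $\bv$, and the paper's one-paragraph proof elides exactly the same distinction.
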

\begin{proof}
We consider the first synthesized nucleotide $u_1,v_1$ of $\bu,\bv \in N(n,t_1+t_2,s)$ respectively. When $u_1=v_1$, the distance between $\bu,\bv$ remains $s$. If we remove them, the length is $n-1$ and processing time reduces by $i \in \{1,2,3,4\}$. When $u_1\neq v_1$, the distance becomes $s-1$ and processing time decreases by $i+j$. The order $(i,j) \neq (j,i)$, so we get the factor 2. 
\end{proof}
As before, we can determine the corresponding generating function  
$F(x,y,z) = \sum_{n,t,s \geq 0}N(n,t,s)x^{n}y^{t}z^{s}$

\begin{lemma}\label{lem:generateN-syn}
	$F(x,y,z) = \frac{1}{H(x,y,z)}$,
	where 
	\[
			H = 1-xy^2(1+y^2)((1+y^4)+2zy(1+y+y^2))\,.  
	\]	
\end{lemma}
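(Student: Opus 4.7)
The plan is to convert the recursion of Lemma~\ref{lem:generateN_synthesis} directly into a functional equation for $F(x,y,z)$. I first handle the base case: the only nonzero value at length $n=0$ is $N(0,0,0)=1$, and with the convention that $N(n,t,s)=0$ whenever any argument is negative, the recursion holds for all $n\ge 1$. Multiplying both sides by $x^{n}y^{t}z^{s}$ and summing over $n\ge 1$ and $t,s\ge 0$, the left-hand side contributes $F(x,y,z)-1$, while after re-indexing $n'=n-1$, $t'=t-2i$ (resp.\ $t'=t-i-j$), the ``$u_1=v_1$'' term on the right becomes $x\sum_{i=1}^{4} y^{2i}\,F(x,y,z)$ and the ``$u_1\ne v_1$'' term becomes $2xz\sum_{1\le i<j\le 4} y^{i+j}\,F(x,y,z)$.

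The main remaining step is algebraic simplification. The first polynomial factor is $y^{2}+y^{4}+y^{6}+y^{8}=y^{2}(1+y^{2})(1+y^{4})$, and the second, after enumerating the six pairs $(i,j)$ with $i<j$ in $\{1,2,3,4\}$, equals $y^{3}+y^{4}+2y^{5}+y^{6}+y^{7}=y^{3}(1+y^{2})(1+y+y^{2})$. Combining these, the functional equation reads
\[
F = 1 + xy^{2}(1+y^{2})\bigl[(1+y^{4}) + 2zy(1+y+y^{2})\bigr]\,F,
\]
so that $F = 1/H$ with the $H$ stated in the lemma.

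The derivation is essentially mechanical; the only places that require care are (i) verifying the two polynomial factorizations, which is routine by direct expansion, and (ii) checking that the boundary convention $N(n,t,s)=0$ for out-of-range arguments really makes the index shifts inside the generating-function sum valid without any boundary correction. No deeper combinatorial identity is needed once the recursion of Lemma~\ref{lem:generateN_synthesis} is granted, so I do not anticipate a real obstacle beyond bookkeeping.
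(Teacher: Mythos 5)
Your proof is correct and follows essentially the same route as the paper: turn the recursion of Lemma~\ref{lem:generateN_synthesis} into a functional equation by multiplying by $x^n y^t z^s$ and summing, then factor the two polynomial sums $\sum_{i=1}^4 y^{2i}=y^2(1+y^2)(1+y^4)$ and $\sum_{1\le i<j\le 4}y^{i+j}=y^3(1+y^2)(1+y+y^2)$ to obtain $F=1+xy^2(1+y^2)\big((1+y^4)+2yz(1+y+y^2)\big)F$. Your treatment is in fact slightly more careful than the paper's, since you explicitly record the $N(0,0,0)=1$ base case and the out-of-range convention that justifies the index shifts.
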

\vspace{-3mm}
As before, we solve the following system of equations.
\begin{equation}\label{eq:synthesisH}
			H = 0 \text{ and }
		xH_{x} = \frac{yH_{y}}{\tau} = \frac{zH_{z}}{\delta}. 
\end{equation}

\begin{lemma}\label{lem:solveH_synthesis}
	The solution of the equation system (\ref{eq:synthesisH}) is
	\begin{align*}
		\hat{x} =  \frac{1-\delta}{\hat{y}^2(1+\hat{y}^2)(1+\hat{y}^4)}\, , 
		\hat{z} = \frac{\delta(1+\hat{y}^4)}{2(1-\delta)\hat{y}(1+\hat{y}+\hat{y}^2)}\,, 
	\end{align*}
where $\hat{y}$ is the smallest positive real solution of the equation
{\small
\begin{align*}
	&\tau (1+y^2)(1+y^4)(1+y+y^2) \\ 
	&=2(1+y+y^2)(1+2y^2+3y^4+4y^6) + \delta (1-y^4)(1+y^2+y^4).
\end{align*}
}
\end{lemma}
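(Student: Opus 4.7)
My plan is to exploit the algebraic structure of $H$ to reduce the system \eqref{eq:synthesisH} to a single polynomial equation in $y$. Write $H = 1 - P$ where $P(x,y,z) = xy^2(1+y^2)\bigl[(1+y^4)+2zy(1+y+y^2)\bigr]$. Since $P$ is linear in $x$, we have $xP_x = P$ and hence $xH_x = -P$; on the zero set $H = 0$ this forces $xH_x = -1$, which is the constant that will drive the other two equations.

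Next, from the chain $xH_x = zH_z/\delta$ we read off $zH_z = -\delta$. Direct computation gives $zH_z = -2xy^3z(1+y^2)(1+y+y^2)$, so $2xy^3z(1+y^2)(1+y+y^2) = \delta$. Decomposing $P = xy^2(1+y^2)(1+y^4) + 2xy^3z(1+y^2)(1+y+y^2)$ and using $P = 1$, the first summand is forced to equal $1-\delta$. Solving immediately yields $\hat{x} = (1-\delta)/[\hat{y}^2(1+\hat{y}^2)(1+\hat{y}^4)]$, and dividing the $z$-relation by this expression delivers the claimed formula for $\hat{z}$.

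It then remains to process the equation $yH_y = -\tau$, equivalently $yP_y = \tau$ (since $P = 1$ at the solution). Applying logarithmic differentiation separately to the two summands $U = xy^2(1+y^2)(1+y^4)$ and $V = 2xy^3z(1+y^2)(1+y+y^2)$, whose values at $(\hat{x},\hat{y},\hat{z})$ are already pinned down to $1-\delta$ and $\delta$, produces
\[
\tau = (1-\delta)\Bigl[2 + \tfrac{2y^2}{1+y^2} + \tfrac{4y^4}{1+y^4}\Bigr] + \delta\Bigl[3 + \tfrac{2y^2}{1+y^2} + \tfrac{y+2y^2}{1+y+y^2}\Bigr].
\]
Clearing denominators against the common multiple $(1+y^2)(1+y^4)(1+y+y^2)$ and invoking the identities $1+y^2+y^4+y^6 = (1+y^2)(1+y^4)$, $1+y+2y^2+y^3+y^4 = (1+y^2)(1+y+y^2)$, and $(1-y^4)(1+y^2+y^4) = (1+y^2)(1-y^6)$ should collapse the resulting expression into the polynomial identity stated in the lemma. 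Finally, the positivity of $\hat{x}$ and $\hat{z}$ together with the uniqueness clause of Theorem \ref{thm:acsv} singles out the smallest positive real root as $\hat{y}$.

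The main obstacle is the bookkeeping in the final step: the numerator and denominator of degrees up to eight must be expanded carefully, and recognising the cyclotomic-like factors that produce the compact right-hand side $\delta(1-y^4)(1+y^2+y^4)$ is where a purely mechanical calculation is likely to stall. Using the decomposition $P = U + V$ with $U = 1-\delta$, $V = \delta$ from the outset (rather than expanding $P$ first) should keep the computation tractable by isolating the $\delta$-dependence at each stage.
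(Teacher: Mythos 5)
Your reduction is essentially the same route the paper takes---get $xH_x = -1$ on $H=0$, derive $\hat z$ and $\hat x$, then plug into the $y$-equation---but the decomposition $P = U + V$ with $U(\hat x,\hat y,\hat z)=1-\delta$, $V(\hat x,\hat y,\hat z)=\delta$ together with logarithmic differentiation is a clean way to organize the bookkeeping. Your formula
\[
\tau = (1-\delta)\Bigl[2 + \tfrac{2y^2}{1+y^2} + \tfrac{4y^4}{1+y^4}\Bigr] + \delta\Bigl[3 + \tfrac{2y^2}{1+y^2} + \tfrac{y+2y^2}{1+y+y^2}\Bigr]
\]
is correct: it is algebraically identical to what the paper gets by substituting $\hat x,\hat z$ into its expression for $yH_y$.

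The gap is in the last step, where you write that clearing denominators ``should collapse'' to the polynomial stated in the lemma---you never actually carry out the expansion, and if you do it does \emph{not} collapse to the stated form. Multiplying your expression for $\tau$ by $(1+y^2)(1+y^4)(1+y+y^2)$, the $\delta$-independent part does reduce to $2(1+y+y^2)(1+2y^2+3y^4+4y^6)$ as claimed, but the $\delta$-coefficient is
\[
(1+y^2)\bigl[-4y^4(1+y+y^2) + (1+2y+3y^2)(1+y^4)\bigr] = (1+y^2)(1+2y+3y^2-3y^4-2y^5-y^6) = (1-y^4)(1+2y+4y^2+2y^3+y^4),
\]
which is \emph{not} equal to $(1-y^4)(1+y^2+y^4)$; for instance at $y=2$ the two differ ($-795$ versus $-315$), and a direct numerical check of $yH_y/\tau=-1$ confirms the first expression. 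So the polynomial displayed in Lemma~\ref{lem:solveH_synthesis} appears to contain an error; the paper's own later computation of $y_{\min}$ in the appendix uses the denominator $(1-y^4)(y^4+2y^3+4y^2+2y+1)$, which is exactly the factor your derivation produces. Your ``invoking the identities \dots $(1-y^4)(1+y^2+y^4)=(1+y^2)(1-y^6)$'' is a true identity but it is not the one that arises; you would have caught the mismatch by finishing the expansion rather than asserting it. The derivation itself is sound---the gap is the unverified final claim that it matches the (miswritten) lemma statement.
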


Applying \eqref{eq:asym_approx}, we have that
\begin{equation*}
	\lim_{n \to \infty} \frac{\log N(n,\tau n,\delta n)}{n} = -\log \hat{x} - \tau \log \hat{y} - \delta \log \hat{z}\,.  
\end{equation*}

Recall that $|T(n,\tau n,\delta n)| \le  \sum_{t=0}^{2\tau n}\sum_{s=0}^{\delta n}N(n, t, s)$. Hence,
\begin{align*}
	&\widetilde{T}(\tau,\delta) \\
	& \le \max_{0\le \tau_1 \le 2\tau}\max_{0\le \delta_1 \le \delta} -\log \hat{x} - \tau_1 \log \hat{y} - \delta_1 \log \hat{z} \\
	& = \begin{cases}
		-\log \hat{x} - 2\tau \log \hat{y} - \delta \log \hat{z}\,, & \text{ if }\tau < 5/2, \delta\le \delta_{\rm{max}}\,,\\
		2\bCap(\tau) \,, & \text{ if }\tau < 5/2, \delta\geq \delta_{\rm{max}}\,,\\
		2 + \HH(\delta) + \delta \log 3 \,, & \text{ if }\tau \geq 5/2, \delta\le 3/4\,,\\
		4 \, & \text{ if }\tau \geq 5/2, \delta\geq 0.75\,.
	\end{cases}
\end{align*}
Here, 
\begin{align*}
&\delta_{\rm{max}} = \frac{2y_{\rm{min}}(1+y_{\rm{min}}+y_{\rm{min}}^{2})}{(1+y_{\rm{min}}^{4})+2y_{\rm{min}}(1+y_{\rm{min}}+y_{\rm{min}}^{2})},
\end{align*}
and $y_{\rm{min}}$ is the smallest positive real solution of the equation
{\footnotesize
\begin{align*}
\hspace*{-2mm} \frac{y}{(1+y^4)+2y(1+y+y^2)} = \frac{\tau(1+y^2)(1+y^4)-(4y^6+3y^4+2y^2+1)}{(1-y^4)(y^4+2y^3+4y^2+2y+1)}.
\end{align*}
}

Finally, we obtain the following lower bound for $\alpha_{SY}(\tau,\delta)$.
\begin{proposition}\label{prop:gv-sy}
	For fixed $\tau,\delta > 0$, consider the above upper bound for $\widetilde{T}(\tau,\delta)$.
	Then we have that $\alpha_{SY}(\tau,\delta)\geq R^{(SY)}_{\rm{GV}}(\tau,\delta)$, where
	$R^{(SY)}_{\rm{GV}}(\tau,\delta) \triangleq 2\bCap(\tau) - \widetilde{T}(\tau,\delta)$.
\end{proposition}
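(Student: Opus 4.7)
The plan is to obtain Proposition~\ref{prop:gv-sy} as a direct application of the general Gilbert--Varshamov inequality $A(\cS_n,d)\ge |\cS_n|^2/|T(\cS_n,d-1)|$ of~\cite{gu1993generalized} (recorded in Section~\ref{sec:GV}) to the constrained space $\cS_n = \cS(n,\le\floor{\tau n})$ equipped with Hamming distance and minimum distance $d = \floor{\delta n}$; essentially everything else has already been set up by the preceding subsection, and the proof amounts to asymptotic bookkeeping.

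Concretely, I would proceed in three steps. First, since Hamming distance is a bona fide metric on $\cS(n,\le\floor{\tau n})$, the GV inequality applies and gives
\[
A_{SY}(n,\le\!\floor{\tau n},\floor{\delta n}) \;\ge\; \frac{|\cS(n,\le\floor{\tau n})|^{2}}{|T(n,\floor{\delta n}-1,\floor{\tau n})|}.
\]
Second, take $\log$, divide by $n$, and pass to $\liminf_{n\to\infty}$ on the left. The numerator contributes $2\bCap(\tau)$ by Proposition~\ref{prop:synthesis-cap}, while the denominator contributes at most $\widetilde{T}(\tau,\delta)$ by definition (replacing $\floor{\delta n}-1$ by $\floor{\delta n}$ changes the ball size only by sub-exponential factors, which disappear after taking $(\log)/n$). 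Third, invoke the explicit upper bound on $\widetilde{T}(\tau,\delta)$ established immediately before the statement via the chain Lemma~\ref{lem:generateN_synthesis} $\to$ Lemma~\ref{lem:generateN-syn} $\to$ Lemma~\ref{lem:solveH_synthesis} $\to$ Theorem~\ref{thm:acsv}. Combining the three steps yields $\alpha_{SY}(\tau,\delta) \ge 2\bCap(\tau) - \widetilde{T}(\tau,\delta) = R^{(SY)}_{\rm{GV}}(\tau,\delta)$.

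The main obstacle is the piecewise character of $\widetilde{T}(\tau,\delta)$. One must justify that the maximization $\max_{0\le \tau_1\le 2\tau,\,0\le \delta_1\le \delta}$ arising from the bound $|T(n,\floor{\delta n},\floor{\tau n})| \le \sum_{t,s} N(n,t,s)$ --- a sum of only $O(n^{2})$ terms, hence exponentially dominated by its largest summand --- lands at the interior critical point of Lemma~\ref{lem:solveH_synthesis} in the regime $\tau<5/2,\;\delta\le\delta_{\rm{max}}$, and otherwise is attained on the boundary, recovering $2\bCap(\tau)$ when the distance constraint becomes inactive, the unconstrained two-ball rate $2+\HH(\delta)+\delta\log 3$ when the synthesis constraint becomes inactive, or the trivial $4$ when both are inactive. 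A secondary technical point is to check the hypothesis $G(\vz^{*})\neq 0$ in Theorem~\ref{thm:acsv}; this is automatic from Lemma~\ref{lem:generateN-syn} since $G\equiv 1$ there. Finally, it is worth noting that using an upper bound on $\widetilde{T}$ in place of its exact value still yields a valid lower bound on $\alpha_{SY}$, because inflating the denominator of the GV inequality only weakens it.
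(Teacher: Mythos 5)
Your proposal is correct and matches the paper's intended argument: the paper gives no explicit proof, treating the proposition as an immediate instance of the generalized GV bound~\eqref{eq:gv} from Section~\ref{sec:GV} applied to $\cS_n=\cS(n,\le\floor{\tau n})$ with Hamming distance, once $\bCap(\tau)$ (Proposition~\ref{prop:synthesis-cap}) and the upper bound on $\widetilde{T}(\tau,\delta)$ have been computed. Your additional observations --- that only an \emph{upper} bound on $\widetilde{T}$ is needed for a valid lower bound on $\alpha_{SY}$, that the $O(n^2)$-term sum is exponentially dominated by its largest summand, and that $G\equiv 1$ trivially satisfies the nonvanishing hypothesis of Theorem~\ref{thm:acsv} --- are exactly the bookkeeping the paper leaves implicit.
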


\subsection{Numerical Plots}

\begin{figure}[!t]
\hspace{-4mm} \begin{minipage}{0.5\linewidth}
		\centering
		\includegraphics[scale=0.35]{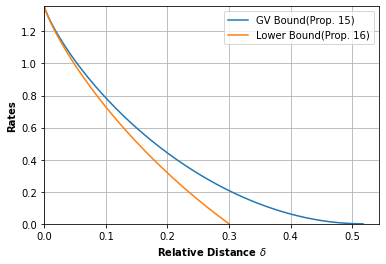}
		\caption*{$\tau = 1.5$}
		
	\end{minipage}%
	\begin{minipage}{0.5\linewidth}
		\centering
		\includegraphics[scale=0.35]{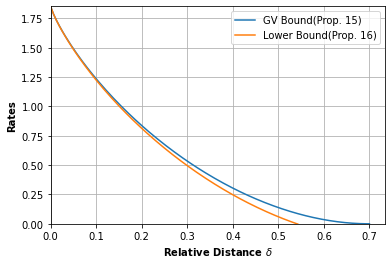}
		\caption*{$\tau=2$}
	\end{minipage}
	\caption{Bounds for  $\alpha_{SY}(\tau,\delta)$ for the {synthesis} channel.}
	\label{fig:synthesis}\end{figure}

In this section, we plot the GV bound for the {\em synthesis channel}. For comparison purposes, we also obtain the plot of a simpler lower bound. 
Specifically, we have that the following crude upper bound, $|T(n,d,\cT)| \le \binom{n}{d} 3^{d}$.
Hence, we obtain the asymptotically lower bound for $\alpha_{SY}(\tau,\delta)$.
 \begin{proposition}\label{prop:lb-sy}
 	For fixed $\delta > 0$, we have that $\alpha_{SY}(\tau,\delta)\geq R^{(SY)}_{\rm{LB}}(n,\delta)$, where
 	$R^{(SY)}_{\rm{LB}}(n,\delta) \triangleq \bCap(\tau) - \HH(\delta) - \delta\log 3$\,.
 \end{proposition}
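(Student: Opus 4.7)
The plan is to combine the generalized Gilbert--Varshamov inequality from Section~\ref{sec:GV} with a straightforward volumetric upper bound on the Hamming ball, bypassing the sharp multivariate analytic combinatorics estimate of Proposition~\ref{prop:gv-sy}. First I would invoke $A_{SY}(n,\le\!\cT,d) \geq |\cS(n,\le\!\cT)|^2/|T(n,d,\cT)|$ from Section~\ref{sec:GV}. Setting $\cT = \floor{\tau n}$ and $d = \floor{\delta n}$, taking logarithms, dividing by $n$, and invoking the capacity formula of Proposition~\ref{prop:synthesis-cap}, this reduces the task to upper-bounding $\limsup_n \frac{1}{n}\log |T(n,\floor{\delta n},\floor{\tau n})|$.

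Next, I would bound $|T(n,d,\cT)|$ by dropping the synthesis-time constraint on the second coordinate of each pair. Writing $T(n,d,\cT) \subseteq \cS(n,\le\!\cT) \times \Sigma^n$ with the Hamming-distance-$\le d$ condition and summing over the center gives $|T(n,d,\cT)| \leq |\cS(n,\le\!\cT)| \sum_{i=0}^{d}\binom{n}{i}3^{i}$, since the Hamming ball of radius $d$ in $\Sigma^n$ has size $\sum_{i=0}^{d}\binom{n}{i}3^{i}$. A standard entropy estimate yields, for $\delta \leq 3/4$,
\[
\lim_{n\to\infty} \frac{1}{n}\log \sum_{i=0}^{\floor{\delta n}}\binom{n}{i}3^{i} \;=\; \HH(\delta) + \delta\log 3,
\]
so combining with Proposition~\ref{prop:synthesis-cap} produces $\limsup_n \frac{1}{n}\log|T(n,\floor{\delta n}, \floor{\tau n})| \leq \bCap(\tau) + \HH(\delta) + \delta\log 3$.

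Plugging back into the GV inequality gives $\alpha_{SY}(\tau,\delta) \geq 2\bCap(\tau) - \bCap(\tau) - \HH(\delta) - \delta\log 3$, which simplifies to $\bCap(\tau) - \HH(\delta) - \delta\log 3 = R^{(SY)}_{\rm{LB}}(n,\delta)$, as claimed. There is no real obstacle here: the bound is intentionally crude, its purpose being to serve as a baseline against which the sharper bound of Proposition~\ref{prop:gv-sy} can be compared in Figure~\ref{fig:synthesis}. The only minor caveat concerns the regime $\delta > 3/4$, where the Hamming ball saturates $\Sigma^n$ and $\HH(\delta) + \delta \log 3$ exceeds $2$; there the stated lower bound is still valid but uninformative, and no additional argument is needed.
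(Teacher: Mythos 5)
Your proof is correct and takes essentially the same route as the paper: combine the generalized GV inequality $A_{SY}(n,\le\!\cT,d)\geq|\cS(n,\le\!\cT)|^2/|T(n,d,\cT)|$ with the crude Hamming-ball bound and Proposition~\ref{prop:synthesis-cap}. One small point worth flagging is that the paper's displayed inequality $|T(n,d,\cT)|\le\binom{n}{d}3^d$ appears to omit the factor $|\cS(n,\le\!\cT)|$ (otherwise plugging into $2\bCap(\tau)-\widetilde{T}(\tau,\delta)$ would yield $2\bCap(\tau)-\HH(\delta)-\delta\log 3$, not the stated $\bCap(\tau)-\HH(\delta)-\delta\log 3$); your writeup, which explicitly sums over centers to get $|T(n,d,\cT)|\le|\cS(n,\le\!\cT)|\sum_{i\le d}\binom{n}{i}3^i$, supplies the missing factor and is therefore the more careful version of the argument the paper intends.
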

Figure~\ref{fig:synthesis} illustrates that the improved estimate of $|T(n,r,d)|$ provides a better GV bound compared to the bound obtained by the simple bound of $|T(n,r,d)|$.
}

\section{Acknowledgement}
The work of Han Mao Kiah was supported by the Ministry of Education, Singapore, under its MOE AcRF Tier 2 Award MOE-T2EP20121-0007.

\newpage


\newpage
\appendices
\section*{Appendix}
{
\section{Proof of Lemma \ref{lem:solveH_sticky}}\label{appen_solve_sticky}
\vspace*{-2mm}
We need to find the positive solution of
\vspace*{-1mm}
\begin{equation}\label{eq:stickyH}
\vspace*{-2mm}
\begin{split}
H &= 0 \\
 x_{1}H_{x_1} = x_{2}H_{x_2}& = \frac{yH_{y}}{\rho} = \frac{zH_{z}}{\delta} 
 \end{split}
\end{equation}
where $H = (1-x_1x_2)(1-x_1z)(1-x_2z) - yx_1x_2(1-x_1x_2z^2)$.
\vspace*{-2mm}
\begin{proof}
Take the partial derivatives as
\vspace*{-1mm}
{\footnotesize
\begin{align*}
H &= (1-x_1x_2)(1-x_1z)(1-x_2z) - yx_1x_2(1-x_1x_2z^2),  \tag{i} \\
x_1H_{x_1} &= -x_1(1-x_2z)(x_2 + z - 2x_1x_2z) - yx_1x_2(1-2x_1x_2z^2), \tag{ii}\\
x_2H_{x_2} &= -x_2(1-x_1z)(x_1 + z - 2x_1x_2z) - yx_1x_2(1-2x_1x_2z^2), \tag{iii}\\
\frac{zH_z}{\delta} &= \frac{-z(1-x_1x_2)(x_1 + x_2 - 2x_1x_2z) - yx_1x_2(-2x_1x_2z^2)}{\delta}, \tag{iv}\\ 
\frac{yH_y}{\rho} &= \frac{-yx_1x_2(1-x_1x_2z^2)}{\rho}  \tag{v}.
\end{align*}
}
Firstly, equating (i) and (ii) gives $z(x_1-x_2)(1-x_1x_2) = 0$. As $z=0$ is not positive. If $x_1x_2=1$, then $0= H = y(1-z^2)$, leading to $z=1$. Combining with (iv) and (v), we obtain $2y/\delta = 0$ i.e. $y=0$, which is not feasible. Therefore, $x_1 = x_2$.

Secondly, since $H=0$, we substitute $y$ in (iv) and (v) by
\begin{equation}\label{eq:Ystar}
y = \frac{(1-x_1^2)(1-x_1z)^2}{x_1^2(1-x_1^2z^2)} = \frac{(1-x_1^2)(1-x_1z)}{x_1^2(1+x_1z)}.
\end{equation}

Simplify to
{\small
\begin{align*}
zH_z &= -z(1-x_1^2)(2x_1-2x_1^2z) +  2\frac{(1-x_1^2)(1-x_1z)}{x_1^2(1+x_1z)} x_1^4z^2 \\
	&= -2zx_1(1-x_1^2)(1-x_1z) +2 \frac{(1-x_1^2)(1-x_1z)}{(1+x_1z)} x_1^2z^2 \\
	&= -\frac{2zx_1}{1+x_1z} \Big[ (1-x_1^2)(1-x_1^2z^2) - x_1z(1-x_1^2)(1-x_1z) \Big] \\
	&= -\frac{2zx_1(1-x_1^2)(1-x_1z)}{1+x_1z}, \\
yH_y &= -\frac{(1-x_1^2)(1-x_1z)}{x_1^2(1+x_1z)} x_1^2(1-x_1^2z^2)\\
	&= -\frac{(1-x_1^2)(1-x_1z)(1-x_1^2z^2)}{1+x_1z}. 
\end{align*}
}
By equating (iv) and (v), we get 
\begin{align*}
\frac{2zx_1(1-x_1^2)(1-x_1z)}{\delta} &=  \frac{(1-x_1^2)(1-x_1z)(1-x_1^2z^2)}{\rho} \\
\text{Equivalent to }
\frac{2zx_1}{\delta} &=  \frac{1-x_1^2z^2}{\rho} \text{as in (\ref{eq:Ystar}) $y > 0$.} 
\end{align*}
To ensure $x_1$ and $z$ be positive,
\begin{equation}\label{eq:Zstar}
x_1z 
= \frac{-\rho + \sqrt{\rho^2 + \delta^2}}{\delta}
\end{equation}
Let's substitute $y$ in (ii) by using the equation (\ref{eq:Ystar}).
{\footnotesize
\begin{align*}
\hspace*{-4mm} x_1H_{x_1} &= -x_1(1-x_1z)(x_1+z-2x_1^2 z) -\frac{(1-x_1^2)(1-x_1z)}{x_1^2(1+x_1z)}x_1^2(1-2x_1^2z^2)\\
&= -\frac{1-x_1z}{1+x_1z}\Big[ (x_1^2+x_1z-2x_1^3z)(1+x_1z)+ (1-x_1^2)(1-2x_1^2z^2)\Big]\\
&= -\frac{1-x_1z}{1+x_1z}\Big[1+x_1z -x_1^2z^2 -x_1^3z \Big]
\end{align*}
}
\vspace*{-1mm}
Lastly, we equate (ii) and (v) to get 
\begin{align*}
1+x_1z -x_1^2z^2 - x_1^3z &= \frac{(1-x_1^2)(1-x_1^2z^2)}{\rho}, \\
\rho \Big[(1-x_1^2z^2) + x_1z(1-x_1^2)\Big] &= (1-x_1^2)(1-x_1^2z^2).\\
\text{Recall from (\ref{eq:Zstar}) that, } 1-x_1^2z^2 &= \frac{2\rho}{\delta} x_1z,\\
\text{So, } \rho\Big[\frac{2\rho}{\delta} x_1z + x_1z(1-x_1^2)\Big] &= \frac{2\rho}{\delta} x_1z(1-x_1^2),\\
\text{Equivalent to } \frac{2\rho}{\delta} + (1-x_1^2) &= \frac{2(1-x_1^2)}{\delta}  \text{as } x_1z > 0,\\
\text{Hence, } 1-x_1^2 &= \frac{2\rho}{2-\delta}.
\end{align*}
\vspace*{-2mm}
\begin{equation}\label{eq:Xstar}
\vspace*{-2mm}	
\text{In other words, } x_1 = \sqrt{1-  \frac{2\rho}{2-\delta}}.
\end{equation}
Therefore, Lemma \ref{lem:solveH_sticky} results from equations \ref{eq:Xstar}, \ref{eq:Zstar}, and \ref{eq:Ystar}.
\end{proof}
\vspace*{-2mm}
\section{Computation of Total Ball Size for Sticky Insertions}\label{appen:Tdelta}
When $\delta \leq \delta_{\max}$, $\widetilde{T}(\rho,\delta) = -2\log x - \rho \log y - \delta \log z$, where
\begin{equation*}
 x = \sqrt{1-  \frac{2\rho}{2-\delta}}, y = 2\frac{\sqrt{\rho^2+\delta^2}-\delta}{2-2\rho - \delta}, \text{and } z= \frac{\sqrt{\rho^2+\delta^2}-\rho}{x\delta}
\end{equation*}
We recall that
{\footnotesize
\begin{align*}
-2\log x &= -\log (2- 2\rho - \delta) + \log(2-\delta), \\
-\rho \log y &= -\rho - \rho \log(\sqrt{\rho^2+\delta^2} - \delta) +\rho\log(2-2\rho - \delta)),\\
-\delta \log z &= -\delta \Big( \log(\sqrt{\rho^2+\delta^2} - \rho) -\log x - \log \delta \Big) \\
			&= -\delta \log(\sqrt{\rho^2+\delta^2} - \rho) +\delta \log \delta + \delta \log x \\
			&= -\delta \log(\sqrt{\rho^2+\delta^2} - \rho) +\delta \log \delta + \\
			&\hspace{3mm}+\frac{\delta}{2} (\log (2-2\rho-\delta) - \log(2-\delta)), \\
\widetilde{T}(n,\rho,\delta) &= -\rho + \delta \log \delta - \rho \log(\sqrt{\rho^2+\delta^2} - \delta) - \delta \log(\sqrt{\rho^2+\delta^2}-\rho) \\
					&\hspace{3mm} +(-1+\rho+\delta/2) \log(2-2\rho-\delta) + (1-\delta/2)\log(2-\delta),\\
\widetilde{T}(n,\rho,2\beta) &= -\rho + 2\beta \log(2\beta) - \rho \log(\sqrt{\rho^2+4\beta^2} - 2\beta) \\
						&\hspace{3mm}- 2\beta \log(\sqrt{\rho^2+4\beta^2}-\rho) \\
					&\hspace{3mm} +(-1+\rho+\beta) \log(2-2\rho-2\beta) + (1-\beta)\log(2-2\beta). 
\end{align*}
}
When $\rho$ is fixed,
{\footnotesize
\begin{align*}
\frac{\partial \widetilde{T}(\rho,2\beta)}{\partial \beta} &= 2+2\log(2\beta) + \frac{2\rho}{\sqrt{\rho^2+4\beta^2}} \\	
											&\hspace{3mm} - 2\log(\sqrt{\rho^2+4\beta^2}-\rho) - \frac{8\beta^2}{\sqrt{\rho^2+4\beta^2}(\sqrt{\rho^2+4\beta^2}-\rho)} \\
											&\hspace{3mm} +\log(2-2\rho-2\beta) +1 -\log(2-2\beta)-1.
\end{align*}
}
Note that $\frac{2\rho}{\sqrt{\rho^2+4\beta^2}}-\frac{8\beta^2}{\sqrt{\rho^2+4\beta^2}\big(\sqrt{\rho^2+4\beta^2}-\rho\big)} = -2$.

Hence, $\frac{\partial \widetilde{T}(\rho,2\beta)}{\partial \beta} = 2\log(2\beta)- 2\log(\sqrt{\rho^2+4\beta^2}-\rho)+log(2-2\rho-2\beta) -\log(2-2\beta)= \log\Bigg(\frac{4\beta^2(2-2\rho-2\beta)}{(2-2\beta)\big(\sqrt{\rho^2+4\beta^2}-\rho\big)^2}\Bigg)$.

We equate $\frac{\partial \widetilde{T}(\rho,2\beta)}{\partial \beta} = 0$ to find $\beta_{\max}$,
\begin{align*}
&\text{Leading to } \log\Bigg(\frac{4\beta^2(2-2\rho-2\beta)}{(2-2\beta)\big(\sqrt{\rho^2+4\beta^2}-\rho\big)^2}\Bigg) = 0 \\
 &\text{Equivalent to } 4\beta^2(2-2\rho-2\beta)= (2-2\beta)\big(\sqrt{\rho^2+4\beta^2}-\rho\big)^2. 
\end{align*}

As $0<\rho, \beta<1$, the unique solution is  $\beta_{\max} = \frac{1-\rho}{2-\rho}$.


\section{Optimizations in Proposition~\ref{prop:gv-si}, \ref{prop:sp-si}, \ref{prop:lb-si}}
\subsection{Proposition~\ref{prop:gv-si}}
From Proposition~\ref{prop:gv-si}, $R^{(SI)}_{\rm{GV}}(\beta) \triangleq 2\HH(\rho) - \widetilde{T}(\rho,2\beta)$. Since
\begin{align*}
        \widetilde{T}(\rho,2\beta) 
				 = & 2\beta\log(2\beta) - \rho \log(\sqrt{\rho^2+4\beta^2} - 2\beta) \\
				&\hspace{3mm}- 2\beta \log(\sqrt{\rho^2+4\beta^2}-\rho) \\
				&\hspace{3mm}+(-1+\rho+\beta) \log(1-\rho-\beta) \\
				&\hspace{3mm}+ (1-\beta)\log(1-\beta).\\
\frac{\partial \widetilde{T}(\rho,\beta)}{\partial \rho} =& -\frac{\rho^2}{\sqrt{\rho^2+4\beta^2}\Big(\sqrt{\rho^2+4\beta^2}-2\beta\Big)}\\
										&\hspace{1mm} - \log \Big(\sqrt{\rho^2+4\beta^2}-2\beta\Big)\\
										& + \frac{2\beta}{\sqrt{\rho^2+4\beta^2}}+ \log(1-\rho-\beta) + 1.\\
\frac{\partial2\HH(\rho)}{\partial \rho} =& 2\log(1-\rho) - 2\log(\rho).\\
\text{\hspace{1mm}We obtain }\frac{\partial R^{(SI)}_{\rm{GV}}(\beta)}{\partial \rho} &= \log\Bigg(\frac{\big(\sqrt{\rho^2+4\beta^2}-2\beta\big)(1-\rho)^2}{\rho^2(1-\rho-\beta)}\Bigg).
\end{align*}

Equating $\frac{\partial R^{(SI)}_{\rm{GV}}(\beta)}{\partial \rho} = 0$, we get
\begin{align*}
\rho &= \frac{1-2\beta}{1-\beta} \text{, $R^{(SI)}_{\rm{GV}}(\beta)= 0$.}\\
\text{\hspace{3mm}Or } \rho &= \frac{3(1-\beta) + \sqrt{9\beta^2-2\beta+1}}{4}, \\ \text{ $R^{(SI)}_{\rm{GV}}(\beta)$ is maximized.}
\end{align*}
\subsection{Proposition~\ref{prop:sp-si}}

 Since the sphere-packing bound is $\vspace{2mm} \\ A_{SI}(n,b)  \le \sum_{r=1}^{n-1}2\frac{\binom{n+b-1}{r-1}}{\binom{r+b-1}{r-1}}$,
\begin{align}
	\alpha_{SI}(\beta) & \le \max_{0\le\rho\le 1}(1+\beta)\HH(\frac{\rho}{1+\beta}) - (\rho+\beta)\HH(\frac{\rho}{\rho+\beta}) \notag \\ 
	& 
	=  \max_{0\le\rho\le 1} \beta\log \beta + (1+\beta) \log (1+\beta) \notag \\ 
	& - (\rho + \beta) \log (\rho + \beta)  - (1+\beta - \rho)\log (1+\beta - \rho) \label{eq:sp} 
\end{align} 
 
For fixed $\beta$, the right hand side of equation \ref{eq:sp} {maximizes} at  
\begin{align*}
	1+\beta-\rho &= \rho + \beta \\ \implies \rho = \frac{1}{2}.
\end{align*}

Substituting $\rho = \frac{1}{2}$ back in equation~\ref{eq:sp}, we get
{\small
\begin{align*}
	\alpha_{SI}(\beta) & \le (1+2\beta) + \beta \log \beta + (1+\beta)\log(1+\beta) \\ & - (1+2\beta)\log(1+2\beta) \\ &= (1+2\beta)(1-\HH(\frac{1+\beta}{1+2\beta})) \\ & = R_{SP}^{SI}(\beta)
\end{align*}
}

\subsection{Proposition~\ref{prop:lb-si}}

Since the lower bound is $A_{SI}(n,b) \ge 2\frac{\binom{n-1}{r-1}}{2^{r}\binom{r+2b-1}{r-1}}$,
	\begin{equation}\label{eq:lb}
	\begin{split}
		\alpha_{SI}(\beta) & \ge \HH(\rho) - \rho - (\rho+2\beta)\HH(\frac{\rho}{\rho+2\beta}) \\
		& \notag= -(1-\rho) \log (1-\rho) - \rho - (\rho + 2\beta) \log (\rho + 2\beta) \\
		&\hspace{3mm}+ 2\beta \log (2\beta) 
	\end{split}
	\end{equation}
For fixed $\beta$, the right-hand side of the above equation maximizes at  
\begin{align*}
	\log \frac{1-\rho}{\rho + 2\beta} = 1.
\end{align*}
Therefore, we get $1 -\rho = 2(\rho + 2\beta) \implies \rho = \frac{1-4\beta}{3}$.
Substituting $\rho = \frac{1-4\beta}{3}$ back in equation~\ref{eq:lb}, we get
\begin{align*}
	\alpha_{SI}(\beta) & \ge 2\beta - 1 - (1+2\beta)\log(\frac{1+2\beta}{3}) + 2\beta\log \beta.
\end{align*}

\section{Proof of Proposition~\ref{prop:synthesis-cap}}

Applying Theorem \ref{thm:acsv}, we have $(\bar{x},\bar{y})$ is the root of the system of equations 
\begin{align*}
H_1 &= 0, \\
\tau x\frac{\partial H_1}{\partial x} &= y\frac{\partial H_1}{\partial y} . 
\end{align*}

where $H_1(x,y) = 1-x(y+y^2+y^3+y^4)$.
By solving these equations, we get the required solution.
\begin{align*}
H_1(x,y) &= 1 - x(y+y^2+y^3+y^4) = 0 & \\ \leftrightarrow x &= \frac{1}{y+y^2+y^3+y^4}.
\end{align*}
Besides,
\begin{align*}
\tau x\frac{\partial H_1}{\partial x}&=  - \tau x(y+y^2+y^3+y^4)  & \\ y\frac{\partial H_1}{\partial y} &= -xy(1+2y+3y^2+4y^3) 
\end{align*}
Equating them leads to 
\begin{align*}
&\hspace{6mm}\tau = \frac{1+2y+3y^2+4y^3}{1+y+y^2+y^3}.\\
 &\text{Equivalent to } (4-\tau)y^3 + (3-\tau)y^2 + (2-\tau)y + (1-\tau) = 0.
\end{align*}

\section{Proof of Lemma \ref{lem:generateN-syn}}\label{appen_generate_syn}
The generating function is given by
{\footnotesize
\begin{align*}
F(x,y,z)	&= \sum_{n,t,s \geq 0} N(n,t,s) x^n y^t z^s\\
	 	&= \sum_{n,t,s \geq 0}\Bigg(\sum_{i = 1}^{4}N(n-1,t/2-i+ t/2-i,s) \\
		&\hspace{2mm}+ 2\sum_{i = 1}^{3}\sum_{j = i+1}^{4}N(n-1,t/2-i+t/2-j,s-1) \Bigg) x^n y^t z^s\\
		&= F(x,y,z) \Bigg(\sum_{i=1}^4 xy^{2i}+ 2\sum_{i=1}^{3}\sum_{j = i+1}^{4} xy^{i+j}z\Bigg) + 1 \\
		&= F(x,y,z)\Bigg(x y^2 (1+ y^2) ((1+y^4) + 2 y z (1 +  y  +  y^2))\Bigg)+1.
\end{align*}
}
\section{Proof of Lemma \ref{lem:solveH_synthesis}}\label{appen_solve_synthesis}
We need to find the positive solution of
\begin{equation}
	\begin{split}
		H &= 0 \\
		xH_{x} & = \frac{yH_{y}}{\tau} = \frac{zH_{z}}{\delta} 
	\end{split}
\end{equation}
where $H = 1-xy^2(1+y^2)((1+y^4)+2yz(1+y+y^2))$.
\begin{proof}
	Take the partial derivatives as
	{\footnotesize
		\begin{align*}
			H &= 1-xy^2(1+y^2)((1+y^4)+2yz(1+y+y^2)),  \tag{i} \\
			xH_{x} &= -xy^2(1+y^2)((1+y^4)+2yz(1+y+y^2)), \tag{ii}\\
			\frac{zH_z}{\delta} &= \frac{-2xzy^3(1+y^2)(1+y+y^2)}{\delta}, \tag{iii}\\ 
			\frac{yH_y}{\tau} &= \frac{-2xy^2}{\tau}((1+2y^2)((1+y^4)+2yz(1+y+y^2)) \\								&\hspace{2mm}+(1+y^2)(2y^4+yz(1+2y+3y^2)))  \tag{iv}.
		\end{align*}
	}
	 Since $H = 0$, we get $xH_{x} = \frac{yH_{y}}{\tau} = \frac{zH_{z}}{\delta} = -1$. Further, equating (ii) and (iii) gives
	 \begin{equation}\label{eq:Zhat}
	   z = \frac{\delta(1+y^4)}{2(1-\delta)y(1+y+y^2)}.
	 \end{equation}
     We substitute $z$ in (ii) to obtain
     
      \begin{equation}\label{eq:Xhat}
        x =  \frac{1-\delta}{y^2(1+y^2)(1+y^4)}.
     \end{equation}	
	Then, we substitute $x$ and $z$ in (iv) to obtain
	{\small
	\begin{align}\label{eq:Yhat}
		& \frac{yH_{y}}{\tau} = \\ \notag & -\frac{
		2(1+y+y^2)(1+2y^2+3y^4+4y^6) + \delta (1-y^4)(1+y^2+y^4)}{\tau(1+y^2)(1+y^4)(1+y+y^2)} 
	\end{align}
	}
	Note that $\frac{yH_{y}}{\tau}= -1$.
Therefore, Lemma \ref{lem:solveH_synthesis} results from equations \ref{eq:Xhat}, \ref{eq:Zhat}, and \ref{eq:Yhat}.
\end{proof}

\section{Computation $\delta_{\max}$ and $y_{\min}$ in synthesis channel}\label{appen_solve_delta_syn}

Since evaluating the total ball size is the convex optimization, it can be proved easily that $\hat{z}$ is monotone increasing with $\delta$ (For example, see \cite{keshav2022evaluating} theorem 2) and further since $0 \le \hat{z} \le 1$, the maximum value of $\hat{z}$ is $1$. Hence for $\delta = \delta_{\max}$, we have that $\hat{z} = 1$. Further from computations, we observed that $\hat{x}$ and $\hat{y}$ decrease monotonically with $\delta$.

Therefore we have, $\frac{\delta_{\max}(1+y_{\min}^4)}{(1-\delta)(2y_{\min}(1+y_{\min}+y_{\min}^2))} = 1$ and hence 
\begin{equation}\label{eq:Delta_max}
	\delta_{\max} = \frac{2y_{\min}(1+y_{\min}+y_{\min}^2)}{(1+y_{\min}^4)+2y_{\min}(1+y_{\min}+y_{\min}^{2})}
\end{equation}

Substituting $\delta_{\max}$ in equation~\ref{eq:Yhat}, we obtain that $y_{\min}$ is the smallest positive real solution of the equation
{\footnotesize
	\begin{align*}
		\hspace{-2mm}
		\frac{y}{(1+y^4)+2y(1+y+y^2)} = \frac{\tau(1+y^2)(1+y^4)-(4y^6+3y^4+2y^2+1)}{(1-y^4)(y^4+2y^3+4y^2+2y+1)}.
	\end{align*}
}

}

\end{document}